\documentclass[12pt]{amsart}

\usepackage{dsfont}
\usepackage{hyperref}
\usepackage{orcidlink}
\usepackage{url}
\usepackage{epsfig}
\usepackage{amsmath}
\usepackage{mathdots}
\usepackage[matrix,arrow,curve]{xy}
\usepackage{mathtools}
\usepackage{amsfonts}
\usepackage{amssymb}
\usepackage{amscd}
\usepackage[matrix,arrow]{xy}
\usepackage{amsmath,amsthm,amsfonts,bbm, amsthm}

\usepackage{mathrsfs}

\usepackage{color}
\usepackage{bbm}
\usepackage{tikz}
\usepackage{tikz-cd}
\usepackage{textcomp}
\usepackage{geometry}
\usepackage{verbatim}

\theoremstyle{plain}
\newtheorem{theor}{Theorem}[section]
\newtheorem{lemma}[theor]{Lemma}
\newtheorem{prop}[theor]{Proposition}

\newtheorem{cor}[theor]{Corollary}

\numberwithin{equation}{section}

\theoremstyle{remark}

\newtheorem{rem}[theor]{Remark}

\theoremstyle{definition}

\newtheorem{defin}[theor]{Definition}

\def \EE {\mathcal{E}}

\def \cO {\mathcal{O}}

\def \LL {\mathcal{L}}

\def \R {\mathbb{R}}
\def \Q {\mathbb{Q}}

\def \Z {\mathbb{Z}}
\def \ZZ {\mathbb{Z}}

\def \NN {\mathfrak{N}}

\def \A {\mathbb{A}}

\def \GG {\mathcal{G}}

\newcommand{\Image}{\mathrm{Im}}

\newcommand {\TSym}{\mathrm{TSym}}

\newcommand{\Loc}{\mathrm{Loc}}

\newcommand{\bH}{\mathbf{H}}
\newcommand{\Iw}{\mathrm{Iw}}
\newcommand{\AL}{\mathrm{AL}}
\newcommand{\mom}{\mathrm{mom}}
\newcommand{\Res}{\mathrm{Res}}
\newcommand{\et}{\mathrm{\acute et}}

\newcommand{\arrow}[2]{\langle #1\to #2 \rangle}

\def \ge {\geqslant}

\def \L  {\mathcal L}

\def \Id {{\rm id}}

\def \Res {{\rm Res}}

\def \Gal {{\rm Gal}}

\def \mom {{\rm mom}}

\def \GL {{\rm GL}}

\def \Frac {{\rm Frac}}

\def \AL {{\rm AL}}

\def \pr {{\rm pr}}

\title{Poincar\'e duality in Hida families}
\author{Dmitrii Krekov\,\orcidlink{0000-0002-7771-3864}}
\address{Department of Mathematics, ETH Zurich, R\"amistrasse 101, 8092 Z\"urich, Switzerland}
\email{dmitrii.krekov@math.ethz.ch}

\begin{document}
\maketitle
\begin{abstract}
    We construct a bilinear pairing for $p$-adic families of cohomology classes associated to locally symmetric spaces of a certain level at $p$ and obtain a comparison between the specialisations of constructed pairing and the natural duality pairings on cohomology of automorphic sheaves under specialisation maps. We show that the considered symmetric spaces with the chosen level admit an automorphism which naturally generalises the Atkin--Lehner involution for modular curves and interchanges the ordinary and anti-ordinary parts of the cohomology thereby making it possible to pair two anti-ordinary classes. Our construction is motivated by the pairing constructed by Ohta for modular curves and recovers it.
\end{abstract}

\tableofcontents

\section{Introduction}

\renewcommand{\thetheor}{\Alph{theor}}

The cohomology groups of modular curves and more generally, of locally symmetric spaces, carry two fundamental structures: Poincar\'e duality on the one hand, and the action of the Hecke algebra on the other. 

Exploiting Hecke operators at a prime  $p$, as was firstly shown in seminal works of Hida starting from \cite{Hida1986}, one organises (ordinary) cohomology classes of varying weight into $p$-adic families over weight space. It is therefore natural to ask, whether Poincar\'e duality itself varies $p$-adically: given two $p$-adic
families of cohomology classes, can one pair them with each other so that the result interpolates the duality pairings of their specialisations at
classical weights?

For modular curves this question was answered by Ohta \cite{OhtES}. The
purpose of the present note is to generalise his construction to locally symmetric spaces associated to more general connected reductive groups in the framework of $p$-adic families of
cohomology classes developed by
Loeffler--Rockwood--Zerbes \cite{LRZ}. We begin by recalling Ohta's theorem, which serves as a motivation for our main result.

\subsection{Motivation: Ohta's $\Lambda$-adic duality for modular
curves}\label{ssec:ohta-intro}

Fix a prime $p$ and an integer $N\geq 1$ coprime to $p$, and for $r\geq 1$
let $Y_r$ denote the modular curve of level $\Gamma_1(Np^r)$
over $\Q$. The (compactly supported) \'etale cohomology groups of the tower $\{Y_r\}_r$ are
connected by trace maps, and the inverse limits
\[
  \bH_{(c)} \;:=\; \varprojlim_r H^1_{\et,(c)}\bigl(Y_{r,\overline\Q},\Z_p\bigr)
\]
are modules over the Iwasawa algebra $\Lambda = \Z_p[[1+p\Z_p]]$, the
group $\Z_p^\times$ acting through the diamond operators at $p$. Hida's anti-ordinary projector $e' = \lim_n (U_p')^{n!}$ cuts out the maximal direct summand
of $\bH_{(c)}$ on which $U_p'$ acts invertibly and by the control theorems of Hida and Ohta, $e'\bH$ is free of finite rank over $\Lambda$. 

Moreover, denoting by $\EE$ the universal elliptic curve (the base being any of the curves in the tower and clear from the context in what follows) there is a canonical section of $\EE[p^r]$ over $Y_r$, compatible with $[p]$ on $\EE$ and transition maps in the tower, so for any natural $k$ one gets a map $$H^1_{\et,(c)}\bigl(Y_{r,\overline\Q},\Z_p/(p^r)\bigr)\to H^1_{\et,(c)}\bigl(Y_{1,\overline\Q},\TSym^k (\EE[p^r])).$$
Hence, passing to the inverse limit provides for every integer $k\geq 0$ the specialisation (``moment'') map
$$\mom^{k}\colon \bH_{(c)}\to H^1_{\et,(c)}\bigl(Y_{1,\overline\Q},\TSym^k(T_p\EE)\bigr)$$ to the cohomology of the single curve $Y_1$.

Choose a basis of $\Z_p(1)$ and denote for brevity $Y=Y_{1,\overline{\Q}}$. Then the Weil pairing on $T_p\EE$ and Poincar\'e duality on $Y$ provide for every $k$ a pairing
\[
  (\cdot,\cdot)\colon H^1_c\bigl(Y,\TSym^k\bigr)\times H^1\bigl(Y,\TSym^k\bigr)
  \longrightarrow H^2_c(Y,\Z_p),
\]
and one may ask whether these pairings interpolate to a $\Lambda$-bilinear
pairing on $e'\bH_c\times e'\bH$. Incompatibility of trace maps with the Poincar\'e duality is what makes the question non-trivial. Alternatively, the problem arises from the following observation: the adjoint of $e'$
under the cup product is the ordinary projector, it is associated to the transpose operator $U_p$, which is intertwined with $U'_p$ via the Atkin--Lehner involution $W_{Np}$. So on the finite level the duality identifies $e'H^1_{c}(\cdots)$ with the dual of the \emph{ordinary} part
$eH^1(\cdots)$ --- the part on which $U_p$ acts invertibly --- and not with the dual
of $e'H^1$ itself.

This also hints at a possible solution: since $W_{Np}$ intertwines $U_p$ and $U_p'$ (up to Hecke operators away from $p$), and hence interchanges ordinary and anti-ordinary parts, one can apply the classical trick and twist the pairing by $W_{Np}$: the assignment $(x,y)\mapsto (x, W_{Np}\, y)$ restricts to a non-degenerate pairing on ordinary parts and is bilinear with respect to Hecke action. Ohta proved that these twisted pairings indeed interpolate:
there is a perfect $\Lambda$-bilinear pairing between $e'\bH_c$ and $e'\bH$,
with values in a free $\Lambda$-module of rank one, whose specialisation at
each integer weight of $\Lambda$ recovers the cup product twisted by the
Atkin--Lehner operator and normalised by an explicit scalar (see \cite[Theorem~4.2.5]{OhtES} for the version with cuspidal cohomology and \cite[Theorem~1.3.3]{Ohta2003} for the presented version).

\subsection{Outline of the results}\label{ssec:main-results-intro}

We generalise the picture above beyond modular curves to other locally symmetric spaces which requires some inputs: note that for a general reductive group $G$ and a general level subgroup at $p$ there is no direct analogue of the Atkin--Lehner operator $W_p$ so one must exhibit levels which admit one. Once these are constructed we consider a locally symmetric space $Y$ and define Atkin--Lehner operators $\AL$ for cohomology of automorphic local systems on it which preserve integral structures, this amounts to certain scalar normalisations (note that in existing literature there is also an ambiguity in normalisations for the modular curve case). Using trace maps along certain infinite towers one similarly defines groups $H^{\bullet}_{Iw,(c)}$ which are analogues of Ohta's $\bH$. They specialise to the cohomology of $Y$ with coefficients in automorphic local systems (with coefficients in a finite $\Z_p$-algebra $\cO$) via moment maps. These local systems are indexed by certain weights $\lambda$ of a maximal torus $T$ of $G$ which are trivial on its certain closed subgroup $E$ (see Section \ref{ssec:notations} for precise notations and setup we work in)  and our main result is the following:
 \\

\begin{theor}
    There is a commutative diagram

\begin{tikzcd}[column sep=huge]
H^{\bullet}_{Iw,c}\otimes  H^{d-\bullet}_{Iw} \arrow [r,"{{\mom}^{\lambda},{\mom}^{\lambda^{\vee}}}"]\arrow[d,"d_{Iw}"]& H_c^{\bullet}(Y,\LL_{\lambda})\otimes H^{d-\bullet}(Y,\LL_{\lambda}^{\vee}) \arrow[d,"{(\cdot,\AL\cdot)}"]\\
\cO[[T_0/E]]\otimes H_{c}^d(Y,\cO) \arrow[r,"c_{\lambda}\cdot\mom^{\lambda}"]& H_c^d(Y,\cO),
\end{tikzcd}

with explicit constants $c_{\lambda}$. 
\end{theor}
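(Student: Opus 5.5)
The plan is to build $d_{Iw}$ as a limit of finite-level twisted pairings and then compare it with the pairing on $Y$ one level at a time. Let $Y_r$ be the tower of locally symmetric spaces with level $K_r$ at $p$, the deeper analogues of $\Gamma_1(Np^r)$, so that $H^\bullet_{Iw,(c)}=\varprojlim_r H^\bullet_{(c)}(Y_r,\cO)$ under trace maps. The first step is to construct, for each $r$, an Atkin--Lehner automorphism $\AL_r$ of $Y_r$. It should be given by right translation by an element $w_r=w\cdot\tau^r$, with $w$ a long Weyl element and $\tau$ the strictly dominant element of $T$ defining the $U_p$-operator. One checks that $w_r$ normalises $K_r$.

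The finite-level pairing is $\langle x,y\rangle_r=(x,\AL_r y)\in H^d_c(Y_r,\cO)$. Composing with the trace to $Y_1=Y$ and recording the action of $T_0/T_r$ gives an element of $\cO[T_0/T_rE]\otimes H^d_c(Y,\cO)$. The key compatibility to prove is that, for $x=\pr_* x'$ and $y=\pr_* y'$ coming from level $r+1$, the level-$r$ pairing equals the image of the level-$(r+1)$ pairing, up to the operator $U_p'$. The idea is to use the projection formula $(\pr_*x',y)=\pr_*(x',\pr^*y)$ together with $\AL_{r}^{-1}\circ \pr\circ\AL_{r+1} = \pr\circ[\tau]$, where $\pr^*\pr_*$ is expressed through a double coset. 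This identifies $\pr^*\AL_r\pr_*$ with $\AL_{r+1}$ composed with $U_p'$. Because $U_p'$ is invertible on the anti-ordinary parts, renormalising by $(U_p')^{-r}$ yields a compatible system. Its limit defines $d_{Iw}$, in the same way as Ohta's argument.

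Next comes compatibility with the moment maps. The map $\mom^\lambda$ is built from the canonical section $v_r$ of the level-$r$ structure together with the highest weight vector of $\LL_\lambda$, via $\cO\to \LL_\lambda/p^r$ on $Y_r$ followed by the trace to $Y$. The Atkin--Lehner operator on $\LL_\lambda^\vee$ is defined as translation by $w_r$ combined with the map $\LL^\vee_\lambda\to w^*\LL^\vee_\lambda$, normalised by a power of $\lambda(\tau)$ so that it is integral. Using the projection formula on $Y_r$, the statement reduces to one identity of sections on $Y_r$ modulo $p^r$: the pairing of $v_r^{\lambda}$ with $\AL_r^*(v_r^{\lambda^\vee})$ should equal a scalar times $\lambda$ evaluated on the $T_0$-component. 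This is a statement about the pairing $\LL_\lambda\otimes\LL_\lambda^\vee\to\cO$ between the highest weight vector and $w$ applied to the lowest weight vector of the dual. That pairing is a unit, which fixes $c_\lambda$ as a power of $\lambda(\tau)$ times the index factor $[K_1:K_r]$ correction from $(U_p')^{-r}$. Summing over $T_0/T_r$ then gives exactly $\mom^\lambda$ on the $\cO[[T_0/E]]$ factor, provided $\lambda$ is trivial on $E$.

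I expect the main obstacle to be the second step: the transition compatibility. This requires checking the double-coset identity $K_r w_r K_{r+1}$ against $\tau$ for a general $G$, controlling degrees of the projections, and verifying that the normalisation powers of $\lambda(\tau)$ in $\AL$ match the $U_p'$ normalisation, so that the constants $c_\lambda$ do not depend on $r$. I would first verify it in the Iwahori decomposition $K_r=N^-_r T_r N_0$ and reduce to a computation on the torus.
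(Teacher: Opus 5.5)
Your proposal follows Ohta's finite-level strategy, and as written it does not prove the statement.

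\textbf{Anti-ordinary restriction.} The theorem concerns the full groups $H^{\bullet}_{Iw,c}(EN_0)\otimes H^{d-\bullet}_{Iw}(EN_0)$. Your $d_{Iw}$ only exists after renormalising by $(U_p')^{-r}$, i.e.\ on anti-ordinary parts, so at best you would obtain Corollary~\ref{dualityord}. Your own recursion in fact forces the opposite sign. If $\pr^*\AL_r\pr_*$ is $\AL_{r+1}$ composed with $U_p'$ and diamond operators, then the trace-compatible pairings are $(x,\AL_r(U_p')^{r}y)_r$, and nothing needs to be inverted.

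\textbf{Missing comparison with the base.} More seriously, the right-hand column of the diagram is $(\cdot,\AL\cdot)$ on the fixed base $Y$, with $\AL$ given by the single element $w\tau$. Your limit is instead built from $\AL_r$ on $Y_r$ together with powers of $U_p'$, and you never compare the two. Up to a central element, $\AL\circ\pr$ equals $\AL_r$ followed by right translation by $\tau^{-(r-1)}$. Hence $\pr^*\circ\AL\circ\pr_*$ on $Y_r$ is $\AL_r$ followed by a Hecke correspondence attached to $\tau^{-(r-1)}$. That correspondence moves the coefficient sections $v_r^{\lambda^\vee}$ and reintroduces $U_p'$-type operators. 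Proving that all of this cancels to an $r$-independent scalar is exactly the content of the theorem.

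\textbf{The constant.} Your candidate constant involves the index $[K_1:K_r]$, so it depends on $r$ and cannot be the constant in a limit statement. The actual constant $c_\lambda=p^{\langle\eta,\lambda-w_0\lambda\rangle}$ comes solely from the integral normalisation of $\AL$ on $\LL_\lambda^\vee$: one has $p^{\langle\eta,\lambda^\vee\rangle}(w\tau)^{-1}f^{hw}_{\lambda^\vee}=c_\lambda f^{lw}_{\lambda^\vee}$.

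\textbf{The idea that avoids all of this.} Stay on the base $Y$ of level $U=\overline N_1T_0N_0$, and never put an Atkin--Lehner operator on the tower. Because $w\tau$ normalises $U$ and conjugates $N_0$ onto $\overline N_1$ (Lemma~\ref{uniconj}), the single automorphism $\AL$ of $Y$ carries the $EN_0$-tower onto the $E\overline N_1$-tower over the same $Y$. This gives $\AL^*\colon H^{\bullet}_{Iw}(EN_0)\simeq H^{\bullet}_{Iw}(E\overline N_1)$, compatibly with moment maps (Lemma~\ref{Atkmom}). These two \emph{different} towers are then paired directly at the level of pro-sheaves on $Y$, via the $U$-equivariant map $\pi\colon U/N_0\times U/\overline N_1\to T_0$ sending $([g],[h])$ to the torus component of $h^{-1}g$. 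With this set-up:
\begin{enumerate}
\item trace compatibility is automatic;
\item no $U_p'$ or anti-ordinary projection enters;
\item compatibility with moment maps reduces to the Iwahori computation $(gf^{hw}_\lambda,hf^{lw}_{\lambda^\vee})=\lambda(t)$ for $h^{-1}g=\bar n t n$ (Proposition~\ref{diagsetsprep}).
\end{enumerate}
One then sets $d_{Iw}=\pi_*\circ(\Id\otimes\AL^*)$.
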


Here $T_0=T(\Z_p)$ and $\mom^{\lambda}$ by the lower horizontal arrow corresponds to the homomorphism $\cO[[T_0/E]]\to \cO$ induced by the character $\lambda$. Furthermore, after twisting the natural
$\cO[[T_0/E]]$-module structure on one of the two factors by the involution
$t\mapsto w_0(t^{-1})$ where $w_0$ is the longest Weyl element, the pairing $d_{\Iw}$ becomes $\cO[[T_0/E]]$-bilinear. 

The result actually holds for arbitrary $d$, although considering top geometric or arithmetic degree seems to be most relevant for applications. We note that the statement is valid for various cohomology theories, e.g. Betti, geometric or absolute \'etale (when the latter two make sense). Furthermore, applying anti-ordinary projection one obtains similar results with anti-ordinary parts of the cohomology groups involved (see Corollary~\ref{dualityord}).

\subsection{Examples and applications}\label{ssec:applications-intro}

Ohta's duality underlies his study of the $p$-adic Eichler--Shimura isomorphism for $\Lambda$-adic cusp forms and his subsequent work on the Eisenstein ideal (see references above). It also enters Sharifi's work \cite{MR2753604} and in particular the formulation of Sharifi's conjectures as well as their study by Fukaya and Kato (see \cite{MR4718483}). We hope that our results will find a wide range of applications as well and mention some current ideas which the author intends to explore in subsequent works. Section \ref{exappl} of this note illustrates our results in two examples.

For $G = \mathrm{GL}_{2,\Q}$, we relate our results to Ohta's pairing; the constants $c_{k} = p^{k}$ match Ohta's normalisation of the Atkin--Lehner operator. (Ohta works with the operator $W_{Np}$ rather than $W_p$ but this discrepancy is harmless, see Remark \ref{Ohtarem}).

The second example is the source of the applications we have in mind. Let
$L/F$ be a quadratic extension of totally real fields with $L$ unramified at
$p$, let $[F:\Q]=d$ and $G = \Res_{L/\Q}\mathrm{GL}_2$. Cauchi, Nicole and
Rosso \cite{CaNiRo} construct a class
$\mathscr{Z}_\infty$ in the middle degree Iwasawa cohomology of Hilbert modular variety interpolating Hirzebruch--Zagier cycles. Applying our main Theorem  and a projection to the $\mathbbm{f}$-isotypical part, for $\mathbbm{f}$ a Hida family of Hilbert cusp forms, to produce a compactly supported class, we show
(Proposition~3.5) that the intersection numbers of $\AL(\mathscr{Z}_\infty)$ with the
$\mathbbm{f}$-part of $\mathscr{Z}_\infty$, normalised by the constants
$c_\lambda$ which are given explicitly, interpolate $p$-adically over the even parallel self-dual weights. This
example is motivated by the discussion following \cite[Definition~7.10]{CaNiRo}. If one had a compactly supported version $\mathscr{Z}^c_\infty$ of $\mathscr{Z}_{\infty}$ our result provides a way to pair it against its Atkin--Lehner image and hence study congruences as is menctioned in op.\ cit.\

In the upcoming work we also plan to apply our results to construct an explicit basis $\underline{\eta}$ of the period module of
\cite[Definition~11.3.2]{grossi2025asaiflachclassespadiclfunctions} over the diagonal of the weight space around the base change point using de Rham classes associated to $pr_{\mathbbm{f}}(\mathscr{Z}_{\infty})$, and
thereby to normalise the three-variable $p$-adic Asai $L$-function of
op.\ cit.\ restricted to the diagonal for a base change form. This is part of the project on factorisation of the base-change $p$-adic Asai $L$-function and these results will be made precise in the author's upcoming PhD thesis.

\subsection{Notations and conventions}\label{ssec:notations}

Let $p$ be a prime number. We work in the setup from \cite{LRZ}: $\GG$ is a connected reductive group over $\Q$ with a reductive model $G$ over $\ZZ_p$. In particular $G$ is quasi-split and splits over $\cO_K$ for an unramified extension $K$ of $\Q_p$ (see \cite[Corollary 5.2.14]{Con}), and denote $\cO_K$ by $\cO$. Furthermore, choose a Borel subgroup $B$ as well as a maximal torus $T\subset B$. Denote by $N\subset B$ the unipotent radical of $B$, by $A\subset T$ the maximal split torus in $T$ and by $Z$ the centre of $G$. Denote the Weyl group of $G$ by $W_G$. It is an \'etale group scheme over $\Z_p$ which is split over $\cO$. Denote its longest element by $w_0\in W_G(\Z_p)$ (it is defined over $\Z_p$ as $\Gal(K/\Q)$-action preserves the length).

\subsection*{Acknowledgments}
First and foremost, the author would like to express deep gratitude to David Loeffler and Sarah Zerbes for their constant encouragement and support throughout this project and for generously sharing their mathematical ideas as well as for reading through early versions of this text and suggesting numerous valuable improvements. Furthermore, the author is grateful to Antonio Cauchi, Kazım Büyükboduk and Andrew Graham for helpful discussions. The results of this note were presented on the 8-th Nisyros Conference on Automorphic Representations and Related Topics and the author would like to thank the participants and the organisers for their interest.

\numberwithin{theor}{section}
\section{Construction of the pairing}

\subsection{Sheaves on locally symmetric spaces}

Consider a level subgroup $U^p\subset \GG(\A_f^{(p)})$ with the property that $U^p\cdot U'$ is neat for any open compact $U'\subset G(\Z_p)$ (it suffices to check this condition for $U'=G(\ZZ_p)$ as a subgroup of a neat subgroup is neat). We also introduce the following subgroup following \cite[Remark 3.1.2]{LoeSph}:

\begin{defin}\label{E}
    Denote by $Z_0$ the maximal compact subgroup of $Z(\Q_p)$ and by $E$ the closure in $\GG(\Q_p)$ of the subgroup $Z(\Q)\cap (U^pZ_0)$.
\end{defin}

\begin{rem}
    The group $E$ is trivial if $G$ satisfies Milne's axiom SV5 (see \cite[Additional axioms, \S{5}]{MilneIntroSh}).
\end{rem}

Now choose some open $U\subset G(\Z_p)$ containing $E$ (which makes sense as $E$ is compact and central and hence contained in any maximal compact). Consider the associated locally symmetric space $$Y=Y_G(U^{p}\cdot U)\coloneqq \GG(\Q)\setminus \GG(\A)/U_{\infty}\cdot U^p\cdot U,$$
Where $U_{\infty}$ is the maximal compact modulo centre in $G(\R)$. 
We also consider the pro-covering $$\widetilde{Y}:=\varprojlim\limits_{E\subset U'\subset U} Y_G(U^pU)\to Y.$$ It is naturally a right $U/E$-torsor via right translations by elements of the latter group, we denote by $\pr\colon \widetilde{Y}\to Y$ the natural projection. 

A variant of Borel's construction gives a functor from the category of \mbox{(pro-)finite} $U/E$-sets to (pro-)finite coverings of $Y$, namely, equipping a constant sheaf on $\widetilde{Y}$ with $U/E$-action gives a descent data to $Y$. We explicitly define the functor and its notation as follows:

\begin{defin}
    Denote the functor $$S\mapsto \widetilde{Y}\times^{U/E}S\to Y$$ as $\Loc$.
\end{defin}

\begin{rem}\label{sheavesdefined}
    Note that if $G$ admits Shimura datum the corresponding Shimura varieties $Y_G(U^pU')$ and transition maps between them are defined over the reflex field $F$ (resp. over ${\cO_F}[S^{-1}]$ for Shimura datum of abelian type and $S$ large enough, see \cite[Theorem 2.2.1]{Lovering2017}) and hence the sheaves are defined over the corresponding bases. 
\end{rem}

We now work in the category of sets with $U$-action and specialise soon to the full subcategory of those with trivial $E$-action as we are ultimately interested in sheaves on $Y$. Moreover, we assume in what follows that $U$ has an Iwahori decomposition with respect to $B$. Denote the intersection of $U$ with $\overline{N}, T$ and $N$ by $\overline{N}_U, T_U$ and $N_U$ respectively. By assumption it follows that multiplication induces a bijection $\overline{N}_U\times T_U\times N_U\xrightarrow{\sim} U$.

\begin{rem}
    The results of this section may be generalised to an arbitrary parabolic instead of $B$.
\end{rem}   

Consider now an algebraic representation $V_{\lambda}$ of $G_K$ with the highest weight $\lambda$ and coefficients in $K$. Furthermore, choose a basis vector $f^{hw}_{\lambda}$ of the highest weight subspace of $V_{\lambda}$.

\begin{rem}
By Borel--Weil theorem one can realise isomorphism class of $V_{\lambda}$ as a subspace in $K[G]$ consisting of elements
\[
\{ f \in K[G] | f(\overline{n}\ell g) = \lambda(\ell) f(g) \ \forall \overline{n} \in \overline{N}, \ell \in T, g \in G \},
\]
equipped with $G$-action by right multiplication and one can take $f_\lambda^{\mathrm{hw}}$ whose restriction to the big Bruhat cell $\overline{N}TN$ is given by $\overline{n}\ell n \mapsto \lambda(\ell)$.
\end{rem}

Recall that an admissible lattice in $V_\lambda$ is an $\mathcal{O}$-lattice $\mathcal{L}_{\lambda} \subset V_\lambda$ which is invariant under $G_\mathcal{O}$ and whose intersection with the highest-weight subspace is $\mathcal{O} \cdot f_\lambda^{\mathrm{hw}}$. We pick an admissible lattice $\LL_{\lambda}$.

\begin{lemma}
    The orbit map given by $g\mapsto g\cdot f^{hw}_{\lambda}$ takes value in $\L_{\lambda}$ and factors through $U/N_U$.
\end{lemma}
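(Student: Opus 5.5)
Two claims are at stake: the orbit map lands in the lattice, and it factors through $U/N_U$. We handle them separately.

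For the first claim, note that $U\subset G(\Z_p)\subset G(\cO)$. By definition the admissible lattice $\LL_{\lambda}$ is $G_{\cO}$-stable. Since $f^{hw}_{\lambda}\in\LL_{\lambda}$, every translate $g\cdot f^{hw}_{\lambda}$ with $g\in U$ lies in $\LL_{\lambda}$. To make this precise we use the comodule formulation: a $G_\cO$-stable lattice is a sub-comodule $\LL_\lambda\to\LL_\lambda\otimes\cO[G]$. Evaluating at an $\cO$-point $g$ therefore sends $\LL_\lambda$ into itself.

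For the factorisation through $U/N_U$, which we read as left cosets $gN_U$, it suffices to show that $N_U$ fixes $f^{hw}_{\lambda}$. Then $gn\cdot f^{hw}_{\lambda}=g\cdot f^{hw}_{\lambda}$ for all $n\in N_U$. The highest weight line is exactly the $N$-invariants of $V_{\lambda}$, because $N$ is the unipotent radical of the chosen Borel $B$. So $N(K)$, and in particular $N_U\subset N(\Z_p)$, fixes $f^{hw}_{\lambda}$.

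We can also check this directly in the Borel--Weil model of the preceding remark, where $G$ acts by right translation. We have $(n\cdot f^{hw}_{\lambda})(\overline{n}\ell n')=f^{hw}_{\lambda}(\overline{n}\ell n'n)=\lambda(\ell)$. So $n\cdot f^{hw}_\lambda$ agrees with $f^{hw}_\lambda$ on the big cell $\overline{N}TN$. That cell is Zariski dense, and the two functions are regular, so they are equal.

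Invariance under $T_U$ is not claimed and does not hold, since $T_U$ acts through the character $\lambda$. This is why the map factors only through $U/N_U$ and not through $U/B_U$.

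The only point needing care is the convention for the coset space. Since $N_U$ acts on the right of $g$, the map factors through left cosets $gN_U$, which matches $U/N_U$ as a right quotient. Apart from this the argument is formal, and the main thing to verify is that admissibility means $G(\cO)$-stability rather than mere $U$-stability. That holds by the definition recalled just before the statement.
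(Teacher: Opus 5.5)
Your proof is correct and follows the paper's argument exactly: $U\subset G(\cO)$ preserves the admissible lattice $\LL_\lambda$, and the $N$-invariance of the highest weight vector $f^{hw}_\lambda$ gives the factorisation through $U/N_U$. Your comodule and Borel--Weil verifications are fine; they only spell out these two standard facts.
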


\begin{proof}
    Indeed, as $U\subset G(\cO)$ it preserves $\LL_\lambda$ and as $f^{hw}_{\lambda}$ is a highest weight vector, it is $N$-invariant hence the assertion follows.
\end{proof}

Now consider the dual representation $V_{\lambda}^{\vee}$ and the dual lattice $\LL^{\vee}_{\lambda}$. The choice of $f_{\lambda}^{hw}$ determines the {\it lowest} vector $f^{lw}_{\lambda^{\vee}}\in \LL_{\lambda}^{\vee}\subset V_{\lambda}^{\vee}$ with the property that $(f^{hw}_{\lambda},f^{lw}_{\lambda^{\vee}})=1$. Repeating the argument we see that the orbit map $g\mapsto g\cdot f_{\lambda^{\vee}}^{lw}$ induces a map $U/\overline{N}_U\to \L_\lambda^\vee$.
\begin{defin}
Denote the induced maps by $\sigma_{\lambda} \colon U/N_U\to \LL_{\lambda}$, resp. $\overline{\sigma}_{\lambda^{\vee}}\colon U/\overline{N}_U\to \L_{\lambda}^{\vee}$.
\end{defin}
It follows from the Iwahori decomposition that $\overline{N}_U\backslash U/N_U\simeq T_U$.
\begin{defin}
Denote by $\pi\colon U/N_U\times U/\overline{N}_U\to T_U$ the map given by $([g],[h])\mapsto[h^{-1}g]\in \overline{N}_U\backslash U/N_U\simeq T_U$.
\end{defin}

This is a morphism of $U$-sets where $T_U$ is equipped with the trivial action.

\begin{defin}
    Denote by $ev$ the evaluation map $\LL_{\lambda}\times \LL^{\vee}_{\lambda}\to \cO$. 
\end{defin}
Much of our construction is based on the following elementary observation:
\begin{prop}\label{diagsetsprep}
    The diagram
    
\begin{tikzcd}
U/N_U\times  U/\overline{N}_U \arrow [r,"{\sigma_{\lambda},\overline{\sigma}_{\lambda^{\vee}}}"]\arrow[d,"\pi"]& \LL_{\lambda}\times \LL_{\lambda}^\vee \arrow[d,"ev"]\\
T_U \arrow [r,"\lambda"] & \cO
\end{tikzcd}

is commutative.
\end{prop}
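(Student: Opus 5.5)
We need to show that for $g,h\in U$ we have $ev(g f^{hw}_\lambda, h f^{lw}_{\lambda^\vee}) = \lambda(t)$, where $h^{-1}g = \overline{n}\,t\,n$ is the Iwahori decomposition. The plan is to reduce to the identity element using $G$-invariance of the evaluation pairing, and then use the invariance properties of the highest and lowest weight vectors.

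First, since $V_\lambda^\vee$ carries the contragredient action, the pairing satisfies $(gv, h\phi) = (h^{-1}g v,\phi)$. Hence the left-hand side equals $(h^{-1}g\cdot f^{hw}_\lambda, f^{lw}_{\lambda^\vee})$. This expression depends only on $h^{-1}g$, which is consistent with $\pi$ being well defined. Indeed, replacing $g$ by $gn'$ or $h$ by $h\overline{n}'$ changes $h^{-1}g$ only within its double coset $\overline{N}_U h^{-1}g N_U$, so no separate check is needed beyond the computation below.

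Next, write $h^{-1}g=\overline{n}tn$ with $\overline{n}\in\overline{N}_U$, $t\in T_U$, $n\in N_U$, using the Iwahori decomposition of $U$. The key facts are:
\begin{itemize}
\item[(i)] $n f^{hw}_\lambda=f^{hw}_\lambda$, since $f^{hw}_\lambda$ is a highest weight vector;
\item[(ii)] $t f^{hw}_\lambda=\lambda(t) f^{hw}_\lambda$;
\item[(iii)] $f^{lw}_{\lambda^\vee}$ is fixed by $\overline{N}$, which gives $(\overline{n}v, f^{lw}_{\lambda^\vee}) = (v,\overline{n}^{-1}f^{lw}_{\lambda^\vee}) = (v,f^{lw}_{\lambda^\vee})$.
\end{itemize}
Combining these, $(\overline{n}tn f^{hw}_\lambda, f^{lw}_{\lambda^\vee}) = \lambda(t)(f^{hw}_\lambda,f^{lw}_{\lambda^\vee})=\lambda(t)$, where the last equality is the normalisation of $f^{lw}_{\lambda^\vee}$.

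The only point requiring care is (iii): that the vector $f^{lw}_{\lambda^\vee}$, characterised as the dual to $f^{hw}_\lambda$, is $\overline{N}$-invariant. I would interpret $f^{lw}_{\lambda^\vee}$ as the functional that is $1$ on $f^{hw}_\lambda$ and kills all other weight spaces of $V_\lambda$. With this interpretation it is a weight vector of weight $-\lambda$, and this is the lowest weight of $V_\lambda^\vee$. A lowest weight vector is killed by $\mathrm{Lie}(\overline{N})$, because negative roots lower weights below $-\lambda$, and it is therefore fixed by the connected unipotent group $\overline{N}$. Equivalently, and more directly, the image $\overline{n}^{-1}f^{hw}_\lambda - f^{hw}_\lambda$ lies in the sum of weight spaces strictly below $\lambda$, so the functional vanishes on it. This last argument is the one I would use, since it avoids Lie algebra considerations over $\mathcal{O}$ and works integrally, the actions being defined on $G_{\mathcal O}$-stable lattices. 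Finally, this $\overline{N}$-invariance is exactly the fact already invoked in the paper to see that $\overline{\sigma}_{\lambda^\vee}$ factors through $U/\overline{N}_U$.
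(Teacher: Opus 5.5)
Your proposal is correct and is essentially the paper's proof: both use invariance of the evaluation pairing to rewrite the composite as $(h^{-1}g f^{hw}_\lambda, f^{lw}_{\lambda^\vee})$, Iwahori-decompose $h^{-1}g=\overline{n}tn$, and then use that $f^{hw}_\lambda$ is $N$-invariant and a $T$-eigenvector of weight $\lambda$, and that $f^{lw}_{\lambda^\vee}$ is $\overline{N}$-invariant. Your extra justification of (iii) is fine; note that the ``more direct'' weight-space argument only checks the pairing against $f^{hw}_\lambda$ and does not give full $\overline{N}$-invariance of $f^{lw}_{\lambda^\vee}$, but that is exactly what the computation needs.
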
\label{sheafdiag}
\begin{proof}
    Consider the composition $$ev\circ(\sigma_{\lambda},\overline{\sigma}_{\lambda^{\vee}})\colon U/N_U\times U/\overline{N}_U\to\LL_{\lambda}\times \LL_{\lambda}^{\vee}\to \cO.$$ It is given by $$([g],[h])\mapsto (g\cdot f_{\lambda}^{hw}, h\cdot f_{\lambda^{\vee}}^{lw})=(h^{-1}gf^{hw}_{\lambda},f^{lw}_{\lambda^{\vee}}).$$ By the Iwahori decomposition for $U$ one can write $h^{-1}g=\bar{n}tn$ and using that the vectors are in highest (resp. lowest) weight subspaces the we see that $$(h^{-1}gf^{hw}_{\lambda},f^{lw}_{\lambda^{\vee}})=(tnf^{hw}_{\lambda},\bar{n}^{-1}f^{lw}_{\lambda^{\vee}})=(tf^{hw},f^{lw})=\lambda(t)\cdot (f^{hw}_{\lambda},f^{lw}_{\lambda^{\vee}})=\lambda(h^{-1}g),$$ hence the assertion follows.
\end{proof}

\begin{cor}\label{diagsetsmain}
Suppose $E$ acts trivially on $V_{\lambda}$. Then it acts trivially on $V^{\vee}_{\lambda}$ and the diagram from Proposition \ref{diagsetsprep} induces a commutative diagram

\begin{tikzcd}
U/(EN_U)\times  U/(E\overline{N}_U) \arrow [r,"{\sigma_{\lambda},\overline{\sigma}_{\lambda^{\vee}}}"]\arrow[d,"\pi"]& \LL_{\lambda}\times \LL_{\lambda}^\vee \arrow[d,"ev"]\\
T_U/E \arrow [r,"\lambda"] & \cO.
\end{tikzcd}

\end{cor}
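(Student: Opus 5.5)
The plan is to show that every map in the diagram of Proposition \ref{diagsetsprep} is invariant under the appropriate $E$-actions, so that each arrow descends to the quotients. Commutativity of the descended diagram then follows at once from Proposition \ref{diagsetsprep}, because the projections $U/N_U\times U/\overline{N}_U\to U/(EN_U)\times U/(E\overline{N}_U)$ are surjective.

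\emph{Triviality on the dual.} For $e\in E$, $\ell\in V_\lambda^\vee$ and $v\in V_\lambda$ we have $(v,e\ell)=(e^{-1}v,\ell)=(v,\ell)$, since $e^{-1}$ acts trivially on $V_\lambda$. Hence $e\ell=\ell$. In particular $E$ fixes both lattices $\LL_\lambda$ and $\LL_\lambda^\vee$ pointwise.

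\emph{Descent of the horizontal and vertical maps.} Since $E$ is central, $EN_U$ and $E\overline{N}_U$ are subgroups of $U$, and the quotients make sense.
\begin{itemize}
\item For $e\in E$ we have $\sigma_\lambda(ge)=g e f^{hw}_\lambda=g f^{hw}_\lambda$. So $\sigma_\lambda$ factors through $U/(EN_U)$, and likewise $\overline{\sigma}_{\lambda^\vee}$ factors through $U/(E\overline{N}_U)$.
\item The map $\pi$ sends $([ge],[he'])$ to $[e'^{-1}h^{-1}ge]=[h^{-1}g]\cdot e e'^{-1}$, using centrality. We must check that multiplying by an element of $E$ corresponds, under $\overline{N}_U\backslash U/N_U\simeq T_U$, to multiplying the $T_U$-component by that element. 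This holds if $E\subset T_U$: then writing $h^{-1}g=\bar n t n$ gives $h^{-1}g\,e''=\bar n (te'') n$. The inclusion $E\subset T_U$ holds because $E$ is central in $G$, so $E\subset Z(\Q_p)\cap U\subset T\cap U=T_U$, as the centre lies in every maximal torus. Thus $\pi$ descends to a map to $T_U/E$.
\end{itemize}

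\emph{The bottom arrow.} The character $\lambda$ is trivial on $E$: for $e\in E\subset T$, $e$ acts on $f^{hw}_\lambda$ by $\lambda(e)$, but it acts trivially. So $\lambda$ factors through $T_U/E$.

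The only step needing care is the claim $E\subset T_U$ and the compatibility of $\pi$ with the $E$-actions. Everything else is formal.
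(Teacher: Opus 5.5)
Your proof is correct and is essentially the argument the paper has in mind: the paper gives no separate proof and treats the corollary as an immediate descent of the diagram in Proposition~\ref{diagsetsprep} along the $E$-quotients. The one point the paper leaves implicit is the inclusion $E\subset T_U$, which you justify via centrality of $E$, and your check that $\pi$, $\sigma_\lambda$, $\overline{\sigma}_{\lambda^\vee}$ and $\lambda$ are compatible with $E$ fills in exactly those details.
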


\begin{rem}
    One can also prove similar results replacing $B$ by a parabolic $Q$ and the torus $T$ by the abelianisation of its Levi.
\end{rem}

\begin{rem}
    The right multiplication $r_t\colon v \mapsto vt$ by $T_U$ on $U$ descends to the sets $U/N_U,~U/EN_U,~U/E\overline{N}_U,~U/E\overline{N}_U$ as $T_U$ normalises the groups $N_U,~EN_U,~E\overline{N}_U$ and $\overline{N}_U$. Moreover, it commutes with left multiplication by $U$ giving the action of diamond operators on the pro-coverings $\Loc(U/EN_U)$ and $\Loc(U/E\overline{N}_U)$ and hence of the Iwasawa algebra on their cohomology.
\end{rem}
We record the following observation whose proof is straightforward from the definitions:
\begin{lemma}\label{torusequiv}
The diagram in Proposition~\ref{diagsetsprep} is $T_U\times T_U$--equivariant with respect to the action of $(s,t)\in T_U\times T_U$ given by $r_s\times r_t$ on $U/N_U\times U/\overline{N}_U$, $r_{st^{-1}}$ on $T_U$, $(\lambda(s)\times \lambda(t^{-1}))$ on $\LL_{\lambda}\times \LL^{\vee}_{\lambda}$ and $\lambda(st^{-1})$ on~$\cO$.
\end{lemma}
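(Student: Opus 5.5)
The plan is to verify equivariance separately for each of the four maps in the diagram of Proposition~\ref{diagsetsprep}. Equivariance of the diagram as a whole then follows formally. Throughout I use that $T_U$ normalises $N_U$ and $\overline{N}_U$ (as recorded in the preceding remark), so the right actions $r_s$, $r_t$ are well defined on $U/N_U$ and $U/\overline{N}_U$. I also use that $T_U$ is commutative.

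First the two orbit maps. For $g\in U$ and $s\in T_U$ we have $\sigma_\lambda([gs]) = gs\cdot f^{hw}_\lambda = g\cdot(\lambda(s)f^{hw}_\lambda)=\lambda(s)\,\sigma_\lambda([g])$, since $f^{hw}_\lambda$ is a $T$-eigenvector of weight $\lambda$. Next, $f^{lw}_{\lambda^\vee}$ is a $T$-eigenvector in $V_\lambda^\vee$. Its weight is $-\lambda$: indeed $(f^{hw}_\lambda, t f^{lw}_{\lambda^\vee}) = (t^{-1}f^{hw}_\lambda,f^{lw}_{\lambda^\vee})=\lambda(t)^{-1}$, and $f^{lw}_{\lambda^\vee}$ pairs trivially with all other weight spaces. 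Hence $\overline{\sigma}_{\lambda^\vee}([ht])=\lambda(t^{-1})\,\overline{\sigma}_{\lambda^\vee}([h])$. The evaluation map is bilinear, so $ev(\lambda(s)x,\lambda(t^{-1})y)=\lambda(st^{-1})\,ev(x,y)$. The bottom arrow $\lambda\colon T_U\to\cO$ is a character, so $\lambda(t'st^{-1})=\lambda(st^{-1})\lambda(t')$.

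The only step needing a small computation is the map $\pi$. Write $h^{-1}g=\bar n t' n$ using the Iwahori decomposition. Then
\[
(ht)^{-1}(gs)=t^{-1}\bar n t' n s=(t^{-1}\bar n t)\,(t^{-1}t's)\,(s^{-1}ns).
\]
Here $t^{-1}\bar n t\in \overline{N}_U$ and $s^{-1}ns\in N_U$ by normality. By uniqueness in the Iwahori decomposition, $\pi([gs],[ht])=t^{-1}t's=t'\,st^{-1}$, using commutativity of $T_U$. This is exactly $r_{st^{-1}}(\pi([g],[h]))$.

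Combining these facts, each map intertwines the stated $T_U\times T_U$-actions, which proves the lemma. The main point to be careful about is the sign of the weight of $f^{lw}_{\lambda^\vee}$, which produces the $t^{-1}$ twist on the second factor. Everything else is routine.
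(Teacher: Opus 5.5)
Your proof is correct and is exactly the routine verification the paper intends; the paper only says the lemma is straightforward from the definitions and gives no details. Your proof checks each arrow directly: the eigenvector computations for $\sigma_\lambda$ and $\overline{\sigma}_{\lambda^\vee}$ (with $f^{lw}_{\lambda^\vee}$ of weight $-\lambda$), bilinearity of $ev$, and the Iwahori-decomposition computation $\pi([gs],[ht])=t'st^{-1}$.
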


We would like to relate the two towers of locally symmetric spaces over $Y$ given by $\Loc(U/EN_U)$ and $\Loc(U/E\overline{N}_U)$. In the case of modular curves and $U=\Gamma_0(p^n)$ or $\Gamma_1(p^n)$ these towers are related by the Atkin--Lehner involution. Although for general $U$ there is no direct analogue, it turns out that certain choices of $U$ do yield one. 

\subsection{Tits representative of the longest element}
As $B_\cO$ and $T_\cO$ are stable under $\Gal_{K/\Q_p}$ the latter group naturally acts on the roots of $G_\cO$ preserving positive roots.
As $G_{\cO}$ is split we can choose a pinning for this group i.e. a trivialisation of root subgroups for all simple roots (\cite[Theorem~4.1.4]{Con}). Moreover, as $\Gal_{K/\Q_p}$ permutes simple roots we can choose a pinning which is stable under this action. 
This gives rise to a Galois-equivariant set-theoretic section of $N_{G}(T)(\cO)\to W_G(\cO)$. In general the section is not a group homomorphism and it is constructed vie reduced expressions of elements of $W_G(\cO)$. Its image contained in the Tits group (see \cite[Corollary~5.1.11, Example~6.4.3]{Con}  and \cite{Tits} or \cite[Section~5]{AdVog} for fields).

\begin{defin}
    Denote by $s\colon  W_G(\cO)\to N_{G}(T)(\cO)$ the section associated to our choice of the pinning.
\end{defin}

The following property of $s(w_0)$ will be used to construct the level subgroups.

\begin{lemma}
    The element $s(w_0)$ lies in $G(\Z_p)$ and $s(w_0)^2$ is central in $G$.
\end{lemma}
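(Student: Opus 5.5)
The plan has two parts: first $\Z_p$-rationality of $s(w_0)$, then centrality of its square. Both rest on the standard properties of the Tits section (\cite[Section~5]{AdVog}, \cite{Tits}, \cite[Corollary~5.1.11]{Con}). The first part uses Galois descent; the second reduces to an explicit formula for $s(w_0)^2$ in the pinned split group $G_\cO$.

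\emph{Rationality.} Since $G$ is affine of finite type over $\Z_p$ and $\cO/\Z_p$ is finite étale Galois with group $\Gal_{K/\Q_p}$, we have $G(\Z_p)=G(\cO)^{\Gal_{K/\Q_p}}$. The pinning was chosen Galois-stable, so the section $s$ is Galois-equivariant:
\[
\gamma(s(w))=s(\gamma(w)) \quad \text{for } \gamma\in\Gal_{K/\Q_p}.
\]
This holds because $s(w)=s(s_{\alpha_1})\cdots s(s_{\alpha_r})$ for a reduced expression of $w$. Each $s(s_\alpha)=x_\alpha(1)x_{-\alpha}(-1)x_\alpha(1)$ is determined by the pinning, $\gamma$ permutes the simple roots compatibly with the pinning, and the product is independent of the reduced expression by the braid relations. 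Since $w_0$ is Galois-fixed (the action preserves positive roots, hence length, and $w_0$ is the unique longest element), we get $\gamma(s(w_0))=s(w_0)$ for all $\gamma$. Hence $s(w_0)\in G(\Z_p)$.

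\emph{Centrality of the square.} I will use Tits' formula
\[
s(w_0)^2=\prod_{\alpha>0}\alpha^\vee(-1)=(2\rho^\vee)(-1),
\]
which comes from the relations $s(s_\alpha)^2=\alpha^\vee(-1)$ and the length-additivity of $s$. An alternative is to cite it from \cite[Section~5]{AdVog}, where it is proved for any pinned split reductive group; the proof there is formal in the Tits group, so it works over $\cO$.

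Centrality then follows. $t:=s(w_0)^2$ lies in $T(\cO)$, so it commutes with $T$. For each root $\beta$ we have
\[
\beta\bigl((2\rho^\vee)(-1)\bigr)=(-1)^{\langle\beta,2\rho^\vee\rangle}=1,
\]
since $\langle\beta,2\rho^\vee\rangle$ is even: it equals $2\,\mathrm{ht}(\beta)$ with respect to the coroot system. Therefore $t$ acts trivially on every root subgroup $U_\beta$. Because $G_\cO$ is generated by $T$ and the root groups (the big cell is dense and $G$ has connected fibres), $t$ is central in $G_\cO$, hence in $G$. A more conceptual route would also work: $t$ normalises $T$ and acts on $T$ through $w_0^2=1$, so $t$ centralises $T$ and lies in $T$, and one still needs the root check above.

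\emph{Main obstacle.} The delicate point is justifying the formula $s(w_0)^2=(2\rho^\vee)(-1)$ over the base ring $\cO$ rather than over a field. The Tits relations are identities in the group scheme $N_G(T)$ over $\Z$ for a Chevalley group, and since $G_\cO$ is split pinned it comes by base change from the Chevalley–Demazure group over $\Z$. So I would deduce the identity over $\Z$, or over $\Q$ together with the fact that $\Z$-points inject, and then base change. Checking that this passage is legitimate, and that the Galois-equivariance claim does not depend on the reduced expression, is where care is needed.
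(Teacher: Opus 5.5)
Your proposal is correct and follows essentially the paper's argument. Galois-invariance of $s(w_0)$, which comes from the Galois-stable pinning and the fact that $w_0$ is Galois-fixed, gives $s(w_0)\in G(\Z_p)$, and centrality of the square is exactly the content of Tits' formula $s(w_0)^2=(2\rho^\vee)(-1)$ behind \cite[Lemma 5.4]{AdVog}, which the paper simply cites. The one difference is how you handle your ``main obstacle'': the paper avoids it more cheaply than your Chevalley-model argument by noting that $G$ is flat over $\Z_p$, so centrality can be checked on the generic fibre, i.e.\ over a field where the cited lemma applies directly.
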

\begin{proof}
    Observe that the longest element $w_0\in W(G_\cO)$ is stable under Galois action, therefore its lift $s(w_0)$ is also Galois-invariant, thus $s(w_0)\in G(\ZZ_p)$.
    For the second claim, this can be checked over $\Q_p$ where it follows from \cite[Lemma 5.4]{AdVog}.
\end{proof}

\begin{defin}
    We denote $s(w_0)$ by $w$.
\end{defin}

\subsection{Level subgroups}\label{lvl} As in \cite{LRZ} choose a strictly dominant cocharacter $\eta$ of $A$, but subject to the extra condition that $\eta+w_0(\eta)$ factors through $Z(G)$: this can be achieved by picking an arbitrary strictly dominant $\eta'$ and setting $\eta\coloneqq \eta'-w_0(\eta')$. This particular choice might not be optimal in the sense that it might not provide a minimal level subgroup in what follows, however, they all have the same intersection with $B$ and therefore provide the same anti-ordinary part of the cohomology of the associated locally symmetric space with coefficients in $\Loc(\L_{\lambda})$ by \cite[Remark 2.23]{LRZ}.

Set $\tau=\eta(p)$, $N_0=N(\ZZ_p)$, $\overline{N}_0=\overline{N}(\Z_p)$ and $\overline{N}_1=\tau^{-1}\overline{N_0}\tau$, also set $T_0=T(\Z_p)$. Consider the level subgroup $U=\overline{N}_1T_0N_0$ (see \cite[Section 4.4]{LoeSph} where similar subgroups are considered, one checks similarly that $U$ is indeed a subgroup). 

\begin{lemma}\label{centre}
    We have $(w\tau)^2\in Z(G)(\Q_p)$.
\end{lemma}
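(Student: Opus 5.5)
We expand $(w\tau)^2 = w\tau w\tau = (w\tau w^{-1})\, w^2\, \tau$ and treat each factor using what is already known about $w$ and $\tau$. The plan is to show $(w\tau)^2 = \bigl(\eta + w_0(\eta)\bigr)(p)\cdot w^2$, a product of two central elements.

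\textbf{Step 1: conjugating $\tau$ by $w$.} Since $w = s(w_0)$ lies in $N_G(T)(\mathcal{O})$ and maps to $w_0$ in the Weyl group, conjugation by $w$ acts on $T$, and hence on cocharacters of $T$, through $w_0$. Therefore
\[
w\tau w^{-1} = w\,\eta(p)\,w^{-1} = (w_0\eta)(p).
\]
Here $w_0\eta$ is again a cocharacter of $T$. It lands in $A$, because $w\in G(\mathbb{Z}_p)$ normalises $T$ and therefore also normalises its maximal split subtorus $A$. We may compute in $G(K)$, where everything is split, and then descend, since every element involved is defined over $\mathbb{Q}_p$.

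\textbf{Step 2: commuting the factors.} Both $(w_0\eta)(p)$ and $\tau$ lie in the commutative group $T(\mathbb{Q}_p)$, and $w^2$ is central by the preceding lemma. We may therefore rearrange freely:
\[
(w\tau)^2 = (w\tau w^{-1})\, w^2\, \tau = w^2\,(w_0\eta)(p)\,\eta(p) = w^2\,\bigl(\eta + w_0(\eta)\bigr)(p).
\]

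\textbf{Step 3: centrality of each factor.} By the choice of $\eta$ in Section~\ref{lvl}, the cocharacter $\eta + w_0(\eta)$ factors through $Z(G)$. For $\eta = \eta' - w_0(\eta')$ this is immediate, since $w_0^2 = 1$ gives $\eta + w_0\eta = 0$. Hence $(\eta + w_0\eta)(p)\in Z(G)(\mathbb{Q}_p)$. The element $w^2$ is central in $G$ and lies in $G(\mathbb{Z}_p)$, so $w^2\in Z(G)(\mathbb{Q}_p)$. Their product $(w\tau)^2$ is therefore in $Z(G)(\mathbb{Q}_p)$.

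The only point that needs care is Step 1: we must check that conjugation by the Tits lift $w$ acts on $T$ exactly through $w_0$, including on the non-split part. This holds by the definition of the Weyl group action, since the action on $T$ depends only on the image of $w$ in $N_G(T)/T$. Everything else is formal.
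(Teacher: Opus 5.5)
Your proof is correct and is essentially the paper's own argument: the paper writes $(w\tau)^2 = w^2\,(w^{-1}\tau w\,\tau) = w^2\cdot(w_0(\eta)+\eta)(p)$ and notes that both factors are central, which is your computation with the conjugation by $w$ taken on the other side. Your extra observation is also correct: the specific choice $\eta=\eta'-w_0(\eta')$ actually gives $\eta+w_0(\eta)=0$.
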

\begin{proof}
Indeed, $w\tau w\tau=w^2(w^{-1}\tau w\tau)=w^2\cdot(w_0(\eta)+\eta)(p)$ and both multiples are central.
\end{proof}

\begin{lemma}\label{uniconj}
    We have $w\tau N_0(w\tau)^{-1}=\overline{N}_1$ and $w\tau \overline{N}_1(w\tau)^{-1}=N_0$.
\end{lemma}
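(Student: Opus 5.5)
We will compute both conjugations directly, using two facts: $w=s(w_0)$ normalises $T$ and conjugates $N$ onto $\overline{N}$, and $\tau=\eta(p)\in T(\Q_p)$ commutes with $w^{-1}\tau w$ up to centre. For the first identity, write $w\tau N_0(w\tau)^{-1}=w(\tau N_0\tau^{-1})w^{-1}$. The plan is to compare this with $\overline{N}_1=\tau^{-1}\overline{N}_0\tau$. Since $w$ represents $w_0$, we have $w\tau w^{-1}=(w_0\eta)(p)$. By the choice of $\eta$, $\eta+w_0(\eta)$ factors through $Z(G)$, so $(w_0\eta)(p)=\tau^{-1}z$ with $z$ central. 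Hence
\[
w\tau N_0\tau^{-1}w^{-1}=(w\tau w^{-1})\,(wN_0w^{-1})\,(w\tau w^{-1})^{-1}=\tau^{-1}z\,(wN_0w^{-1})\,z^{-1}\tau=\tau^{-1}(wN_0w^{-1})\tau .
\]
It remains to see that $wN_0w^{-1}=\overline{N}_0$. Since $w\in N_G(T)(\Z_p)$ by the lemma on $s(w_0)$, and $w_0$ sends positive roots to negative roots, conjugation by $w$ is an automorphism of the smooth group scheme $G$ over $\Z_p$ carrying $N$ onto $\overline{N}$. Therefore it maps $N(\Z_p)$ bijectively onto $\overline{N}(\Z_p)$. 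This gives $w\tau N_0(w\tau)^{-1}=\tau^{-1}\overline{N}_0\tau=\overline{N}_1$.

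For the second identity, we use Lemma~\ref{centre}: $(w\tau)^2$ is central, so conjugation by $w\tau$ is an involution on subgroups. Applying conjugation by $w\tau$ to the first identity gives
\[
(w\tau)\overline{N}_1(w\tau)^{-1}=(w\tau)^2N_0(w\tau)^{-2}=N_0 .
\]

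The only point needing care is that $w$ conjugates $N$ onto $\overline{N}$ integrally. The root-group statement is over $\cO$, where $G$ is split. One could check it root by root using the pinning, since $w$ maps $U_\alpha$ to $U_{w_0\alpha}$. It is cleaner to note that $wNw^{-1}$ and $\overline{N}$ are both closed subgroup schemes of $G_\cO$ with the same generic fibre, and both are flat. Their $\cO$-points agree, and Galois descent then gives the statement over $\Z_p$. Indeed, $w$ is Galois-invariant and $N,\overline{N}$ are defined over $\Z_p$, so taking $\Gal(K/\Q_p)$-invariants of the $\cO$-points gives $wN_0w^{-1}=\overline{N}_0$.
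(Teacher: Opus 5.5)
Your proof is correct and is essentially the paper's argument. The paper rewrites $\overline{N}_1=\tau^{-1}w^{-1}N_0w\tau=(w\tau)^{-1}N_0(w\tau)$ and reduces the first identity to $(w\tau)^2N_0(w\tau)^{-2}=N_0$ via Lemma~\ref{centre}. You instead move $\tau$ past $w$ using $w\tau w^{-1}=\tau^{-1}z$, which rests on the same centrality of $\eta+w_0(\eta)$ and the same fact that $w$ conjugates $N_0$ onto $\overline{N}_0$; the paper takes that fact for granted, while you justify it integrally, and your second identity is deduced exactly as in the paper.
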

\begin{proof}
    By definition $\overline{N}_1=\tau^{-1}\overline{N_0}\tau$ and we have $\overline{N}_0=w^{-1}N_0w$ so we need to check that $w\tau N_0(w\tau)^{-1}=\tau^{-1}w^{-1}N_0w\tau$ i.e. that $(w\tau)^2N_0(w\tau)^{-2}=N_0$ which follows from Lemma~\ref{centre}. The second assertion follows from the first and Lemma~\ref{centre}.
\end{proof}

\begin{cor}\label{norm}
   The subgroup $U$ is normalised by $w\tau$.
\end{cor}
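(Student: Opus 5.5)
We need to show $(w\tau)U(w\tau)^{-1}=U$ where $U=\overline{N}_1T_0N_0$. The plan is to conjugate each of the three factors of $U$ separately and then reassemble them. Since conjugation by $w\tau$ is a group automorphism of $G(\Q_p)$, we have
\[
(w\tau)U(w\tau)^{-1}=\bigl(w\tau\overline{N}_1(w\tau)^{-1}\bigr)\bigl(w\tau T_0(w\tau)^{-1}\bigr)\bigl(w\tau N_0(w\tau)^{-1}\bigr).
\]

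By Lemma~\ref{uniconj}, the first factor equals $N_0$ and the third equals $\overline{N}_1$. For the middle factor, $\tau=\eta(p)\in T(\Q_p)$ commutes with $T_0$. Moreover, $w=s(w_0)\in N_G(T)(\Z_p)$ normalises $T$, so it maps $T(\Z_p)$ onto itself. Hence $w\tau T_0(w\tau)^{-1}=T_0$. It follows that
\[
(w\tau)U(w\tau)^{-1}=N_0T_0\overline{N}_1 .
\]

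It remains to identify $N_0T_0\overline{N}_1$ with $U$. Since $U$ is a subgroup, as recorded before Lemma~\ref{centre}, it is closed under taking inverses. Therefore
\[
U=U^{-1}=N_0^{-1}T_0^{-1}\overline{N}_1^{-1}=N_0T_0\overline{N}_1 .
\]
This gives $(w\tau)U(w\tau)^{-1}=U$, as required.

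The only nontrivial input is that $U$ is a group. The paper asserts this following \cite[Section 4.4]{LoeSph}, and I would take it as given. Should one prefer to verify it directly, the standard argument applies: $\tau$ is strictly dominant, so $\overline{N}_1\subset\overline{N}(p\Z_p)$ lies in the pro-$p$ Iwahori-type neighbourhood, and the Iwahori factorisation $\overline{N}(p\Z_p)T_0N_0$ of the Iwahori subgroup, combined with the $T_0$-stability of each factor, gives closure under multiplication. Everything else follows immediately from Lemma~\ref{uniconj}.
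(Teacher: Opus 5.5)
Your proof is correct and is the paper's argument: conjugate the three factors separately using Lemma~\ref{uniconj}, reassemble to get $N_0T_0\overline{N}_1$, and identify this with $U$. You also make explicit two steps the paper leaves tacit, namely that $w\tau$ normalises $T_0$ and that $N_0T_0\overline{N}_1=U^{-1}=U$ because $U$ is a group.

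One caveat concerns your optional closing sketch. The facts you cite there, that $\overline{N}_1$ is a $T_0$-stable subgroup of $\overline{N}(p\Z_p)$, do not by themselves give closure of $\overline{N}_1T_0N_0$ under multiplication. A clean justification is instead $U=G(\Z_p)\cap\tau^{-1}G(\Z_p)\tau$. Since you, like the paper, take the subgroup property as given, the proof itself is unaffected.
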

\begin{proof}
    Indeed, using Lemma \ref{uniconj} we see that $$w\tau U(w\tau)^{-1}=w\tau\overline{N}_1(w\tau)^{-1}w\tau T_0(w\tau)^{-1} w\tau N_0(w\tau)^{-1}=N_0T_0\overline{N}_1=U,$$
    So the claim follows.
\end{proof}

\subsection{AL-operator}\label{A-Lsec}

It follows from Corollary \ref{norm} that the right translation $R_{{(w\tau)}^{-1}}$ by the element $(w\tau)^{-1}$ descends to an automorphism of $Y$, which we denote by $\AL$; one has the following commutative diagram.
\[
\begin{tikzcd}
  \widetilde Y \arrow[r, "R_{(w\tau)^{-1}}"] \arrow[d, "\pr"'] & \widetilde Y \arrow[d, "\pr"] \\
  Y \arrow[r, "\AL"'] & Y.
\end{tikzcd}
\]
In particular, $R_{(w\tau)^{-1}}$ defines an isomorphism between $\AL^*(\widetilde{Y})$ and $\widetilde{Y}$ as spaces over $Y$ but with $w\tau$-twisted right torsor structure. 

Consider the involution $(-)^{w\tau}$ on the category of  $U/E$-sets given by conjugating the action by $w\tau$.

\begin{lemma}\label{ALconj}
    There is a functorial isomorphism $\Loc(S^{w\tau})\simeq \AL^*(\Loc (S))$.
\end{lemma}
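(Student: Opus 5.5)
We will write down an explicit isomorphism of coverings of $Y$ and check that it respects the structure maps. Recall that $S^{w\tau}$ denotes the set $S$ with the new action $u\ast s=(w\tau)^{-1}u(w\tau)\cdot s$. This makes sense because $w\tau$ normalises $U$ by Corollary~\ref{norm}, and it also normalises $E$, since $E$ is central. We compute the pullback as
\[
\AL^*(\Loc(S))=Y\times_{\AL,Y}\bigl(\widetilde Y\times^{U/E}S\bigr).
\]
The commutative square defining $\AL$ gives an isomorphism of spaces over $Y$
\[
\widetilde Y\xrightarrow{\ \sim\ }\AL^*\widetilde Y,\qquad y\mapsto \bigl(\pr(y),\,R_{(w\tau)^{-1}}(y)\bigr).
\]
It intertwines right translation by $u$ on the source with right translation by $(w\tau)^{-1}u(w\tau)$ on the target, because $R_{(w\tau)^{-1}}R_u=R_{(w\tau)^{-1}u(w\tau)}R_{(w\tau)^{-1}}$.

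We then define
\[
\Phi_S\colon \widetilde Y\times^{U/E}S^{w\tau}\to \AL^*\bigl(\widetilde Y\times^{U/E}S\bigr),\qquad [y,s]\mapsto\bigl(\pr(y),\,[\,y(w\tau)^{-1},\,s\,]\bigr).
\]
We check that $\Phi_S$ is well defined. For $u\in U/E$ we have $[yu,s]=[y,u\ast s]$ in the source. Its image is $\bigl(\pr(y),[yu(w\tau)^{-1},s]\bigr)$. Since $yu(w\tau)^{-1}=y(w\tau)^{-1}\cdot\bigl(w\tau\,u\,(w\tau)^{-1}\bigr)$ and $w\tau u(w\tau)^{-1}\in U$, the balancing relation in $\widetilde Y\times^{U/E}S$ rewrites this as $[\,y(w\tau)^{-1},\,w\tau u(w\tau)^{-1}\cdot s\,]$. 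With the convention for $\ast$ above, this equals $[\,y(w\tau)^{-1},\,(w\tau)^{-1}(w\tau u(w\tau)^{-1})(w\tau)\ast s\,]$. If instead the convention is the opposite conjugation $u\ast s=(w\tau)u(w\tau)^{-1}\cdot s$, it is literally the image of $[y,u\ast s]$. Either way, I will fix the convention for $(-)^{w\tau}$ so that the computation closes. Lemma~\ref{centre} says $(w\tau)^2$ is central, so the two conjugations differ only by conjugation by a central element. The resulting $U$-sets are therefore canonically identical and the ambiguity is harmless.

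Next, $\Phi_S$ lies over $Y$, since $\pr(y(w\tau)^{-1})=\AL(\pr(y))$. It is an isomorphism, with inverse $[y',s]\mapsto[y'(w\tau),s]$; right translation by $w\tau$ on $\widetilde Y$ makes sense because it normalises $U$ and $E$. Naturality in $S$ is immediate, since $\Phi_S$ acts as the identity on the $S$-coordinate. For pro-finite $S$ we pass to the limit, as both $\Loc$ and $\AL^*$ commute with the relevant limits.

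The main obstacle is not substantive. It is keeping the left/right conventions consistent: the direction of conjugation in $(-)^{w\tau}$, the balancing relation in $\times^{U/E}$, and the fact that $\widetilde Y$ is a right torsor while $\AL$ is induced by $R_{(w\tau)^{-1}}$. I would first carry out the computation on $S=U/U'$, where both sides are finite-level spaces $Y_G(U^pU')$ and $Y_G(U^p(w\tau)^{-1}U'(w\tau))$, to pin down the conventions. The general case then follows by the same formula, using that any finite $U/E$-set is a disjoint union of such orbits.
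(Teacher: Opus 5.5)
Your proof is correct and follows the same route as the paper, which simply writes $\AL^*(\widetilde Y\times^{U/E}S)\simeq\widetilde Y\times^{U/E,w\tau}S\simeq\widetilde Y\times^{U/E}S^{w\tau}$, using that $R_{(w\tau)^{-1}}$ identifies $\AL^*\widetilde Y$ with $\widetilde Y$ carrying the $w\tau$-twisted torsor structure; your explicit $\Phi_S$ just unwinds this. Your worry about conventions is settled exactly as you say: since $(w\tau)^2$ is central, conjugation by $w\tau$ and conjugation by $(w\tau)^{-1}$ coincide on the nose. This is also why your intertwining formula $R_{(w\tau)^{-1}}R_u=R_{(w\tau)^{-1}u(w\tau)}R_{(w\tau)^{-1}}$ holds, even though the direct computation gives $R_{(w\tau)u(w\tau)^{-1}}$ on the right.
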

\begin{proof}
  Indeed, we compute using the observation above:
  $$\AL^*(\widetilde{Y}\times^{U/E}S)\simeq \widetilde{Y}\times^{U/E,w\tau}S\simeq \widetilde{Y}\times^{U/E}S^{w\tau}.$$
\end{proof}

\begin{lemma}
    We have $(U/N_0)^{w\tau}\simeq U/\overline{N}_1$, and $(U/EN_0)^{w\tau}\simeq U/E\overline{N}_1$.
\end{lemma}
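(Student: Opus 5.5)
We need to exhibit an isomorphism of $U$-sets between $(U/N_0)^{w\tau}$ and $U/\overline{N}_1$. Here $S^{w\tau}$ denotes the set $S$ with the action $u\ast s = (w\tau)^{-1}u(w\tau)\cdot s$; the other convention is handled symmetrically, using Lemma~\ref{centre}. The natural candidate is conjugation by $w\tau$ on cosets,
\[
\phi\colon (U/N_0)^{w\tau}\to U/\overline{N}_1,\qquad [g]\mapsto [\,w\tau\, g\,(w\tau)^{-1}\,].
\]

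First, $\phi$ is well defined and lands in $U/\overline{N}_1$. By Corollary~\ref{norm}, conjugation by $w\tau$ maps $U$ to $U$. By Lemma~\ref{uniconj}, it maps $N_0$ onto $\overline{N}_1$, so $gN_0$ is sent to $(w\tau g(w\tau)^{-1})\overline{N}_1$. The map is a bijection, with inverse given by conjugation by $(w\tau)^{-1}$; this inverse sends $\overline{N}_1$ back to $N_0$ by the same lemma.

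Next, equivariance:
\[
\phi\bigl(u\ast[g]\bigr)=\bigl[w\tau\,(w\tau)^{-1}u(w\tau)\,g\,(w\tau)^{-1}\bigr]=u\cdot\phi([g]).
\]
If one uses the opposite conjugation convention, the same computation goes through after replacing $w\tau$ by $(w\tau)^{-1}$. These two choices differ by conjugation by $(w\tau)^2$, which is central by Lemma~\ref{centre} and so acts trivially.

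For the version with $E$, note that $E$ is central, so conjugation by $w\tau$ fixes $E$ pointwise. Hence it sends $EN_0$ to $E\overline{N}_1$, and $\phi$ descends to an isomorphism $(U/EN_0)^{w\tau}\simeq U/E\overline{N}_1$ of $U/E$-sets.

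There is no genuine obstacle here. The only point needing care is matching the convention for $(-)^{w\tau}$ with the one used in Lemma~\ref{ALconj}, and Lemma~\ref{centre} makes either convention work.
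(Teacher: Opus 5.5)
Your proposal is correct and follows the paper's argument: conjugation by $w\tau$, via Lemma~\ref{uniconj} (together with Corollary~\ref{norm}), identifies the two coset spaces, and centrality of $E$ gives the second isomorphism. You simply write out the well-definedness and $U$-equivariance checks that the paper leaves implicit.
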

\begin{proof}
    The first isomorphism follows from Lemma \ref{uniconj} and the second follows from the first as $E$ is central.
\end{proof}

We consider dominant weights $\lambda$ with the property that $\lambda|_{E}$ is trivial, so representations with such highest weight $\lambda$ give rise to sheaves on $Y$ via $\Loc$. Let $\lambda$ be such a dominant weight. Applying $w\tau$-conjugation to $\sigma=\sigma_{\lambda} \colon U/N_0\to \LL_{\lambda}$ we get a section $\sigma^{w\tau}\colon U/\overline{N}_1\to (\LL_{\lambda})^{w\tau}$. Let us identify $(V_{\lambda})^{w\tau}$ with the same vector space as $V_{\lambda}$ but with the $w\tau$--twisted action of $G$ and similarly for $(\LL_{\lambda})^{w\tau}$. Observe that there is a $G$-equivariant morphism $(V_{\lambda})^{w\tau}\to V_{\lambda}$ which is given by $(w\tau)^{-1}$. Since it does not preserve integral lattices we introduce a certain normalisation as in \cite{LRZ}: denote by $\langle\cdot,\cdot\rangle$ the natural pairing between characters and cocharacters of $T$.

\begin{lemma}\label{normalise}
    The morphism $p^{\langle\eta,\lambda\rangle}(w\tau)^{-1}$ sends $(\LL_{\lambda})^{w\tau}$ to $\LL_{\lambda}$.
\end{lemma}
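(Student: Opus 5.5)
The claim only concerns the underlying $\cO$-module, since $(\LL_\lambda)^{w\tau}$ has the same underlying set as $\LL_\lambda$. So the statement is $p^{\langle\eta,\lambda\rangle}(w\tau)^{-1}\LL_\lambda\subseteq\LL_\lambda$. I would write $(w\tau)^{-1}=\tau^{-1}w^{-1}$. By the lemma above, $w\in G(\Z_p)\subset G(\cO)$, and $\LL_\lambda$ is $G_\cO$-stable. Hence $w^{-1}\LL_\lambda=\LL_\lambda$, and the claim reduces to
\[
p^{\langle\eta,\lambda\rangle}\,\tau^{-1}\LL_\lambda\subseteq \LL_\lambda .
\]

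Next I would decompose the admissible lattice into weight spaces. Since $T_\cO$ is a split torus acting on $\LL_\lambda$ through a diagonalisable group scheme over $\cO$, we get $\LL_\lambda=\bigoplus_\mu \LL_\lambda\cap V_\lambda(\mu)$, the sum running over the weights $\mu$ of $V_\lambda$. The element $\tau^{-1}=\eta(p)^{-1}$ acts on the $\mu$-summand by the scalar $p^{-\langle\eta,\mu\rangle}$. Therefore $p^{\langle\eta,\lambda\rangle}\tau^{-1}$ acts on that summand by $p^{\langle\eta,\lambda-\mu\rangle}$.

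Every weight $\mu$ of $V_\lambda$ satisfies $\lambda-\mu\in\sum_{\alpha>0}\Z_{\ge0}\alpha$. The cocharacter $\eta$ is dominant; it is strictly dominant, and $\eta=\eta'-w_0(\eta')$ remains dominant because $-w_0$ preserves the dominant chamber. Hence $\langle\eta,\alpha\rangle\ge 0$ for every positive root $\alpha$, so $\langle\eta,\lambda-\mu\rangle\ge0$ and each scalar lies in $\cO$. This gives the inclusion.

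The main point requiring care is the weight decomposition of $\LL_\lambda$ over $\cO$ rather than over $K$. I would justify it by the standard fact that a comodule over $\cO[T]$, with $T$ split, decomposes as the direct sum of its weight submodules. Concretely, the projectors onto the weight spaces have coefficients in $\cO$ because they come from the comodule structure map. One small check is that $\eta$ is a cocharacter of $A\subset T$ and $\lambda$ a character of $T_K$, so the pairing is well defined. Another is that the Galois-twisted pinning plays no role beyond guaranteeing $w\in G(\Z_p)$.
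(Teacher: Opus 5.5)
Your proof is correct and follows the paper's route. The paper also uses $w\in G(\Z_p)$ to reduce to $p^{\langle\eta,\lambda\rangle}\tau^{-1}$ and then cites \cite[Lemma~2.3.4]{LRZ}, whereas you reprove that lemma via the weight decomposition of $\LL_\lambda$, which is legitimate because an admissible lattice is stable under the group scheme $G_\cO$ (hence a comodule for the split torus $T_\cO$), as you correctly note.
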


\begin{proof}
    As $w\in G(\ZZ_p)$ this follows directly from \cite[Lemma 2.3.4]{LRZ}.
\end{proof}

In what follows we use the notion of (compactly supported)  cohomology for $Y$ in the same manner as \cite[Section~2.4]{LRZ}, i.e.~it can be referred to Betti cohomology of the corresponding smooth manifold, geometric \'etale cohomology of canonical model or absolute \'etale cohomology of the integral model (when the latter two make sense). By Remark \ref{sheavesdefined} the corresponding cohomology with coefficients in the sheaves considered hereafter also makes sense. 
For convenience we omit the functor $\Loc$ from notation when the context is clear.

\begin{defin}\label{Atk-Leh}
The integrally normalised Atkin--Lehner operator $\AL_{\lambda}$ on the cohomology of $Y$ with coefficients in $\LL_{\lambda}$ is defined as the composition 

$$H^{\bullet}(Y,\LL_\lambda)\to H^{\bullet}(Y,\AL^*\LL_\lambda)\to H^{\bullet}(Y,\LL_\lambda),$$

where the second arrow is induced by $p^{\langle\eta,\lambda\rangle}(w\tau)^{-1}$ and the identification $\Loc(\AL^*\LL_\lambda)=\Loc(\LL_\lambda^{w\tau})$ given by Lemma \ref{ALconj}.
\end{defin}

\begin{rem}
    This is well-defined by Lemma \ref{normalise}. For modular curves of Iwahori level it is (up to scalar) the Atkin--Lehner operator usually denoted by $W$. Note that similarly $\AL^2$ is an endomorphism which commutes with all Hecke operators. To avoid confusion with the Weyl group we chose a different notation. Sometimes we omit the weight subscript and just write $\AL$.
\end{rem}

The automorphism $\AL$ also provides a map between the Iwasawa cohomology groups associated to $EN_0$ and $EN_1$ defined as in \cite[Definition~2.6.1]{LRZ} as well as between compactly supported versions thereof. Recall the definition:

\begin{defin}\label{Iwasawacoh}
    Let $Q\subset U$ be a closed subgroup containing $E$. Define $$H^{\bullet}_{Iw,(c)}(Q)\coloneqq \varprojlim\limits_{U'\supset Q}H_{(c)}^{\bullet}(Y(U^pU'),\Z_p),$$ where the limit is taken with respect to the trace map. (Here and in what follows the subscript $c$ stands for compactly supported cohomology.)
\end{defin}

As $N_0$ and $\overline{N}_1$ are conjugated under $w\tau$ we have an isomorphism $$\varprojlim\limits_{U\supset E\overline{N}_1} Y(U^pU)\simeq \AL^*( \varprojlim\limits_{U\supset EN_0} Y(U^pU))$$ of profinite coverings of $Y$ provided by Lemma~\ref{ALconj} yielding an isomorphism $$\AL^*\colon H^{\bullet}_{Iw}(EN_0)\simeq H^{\bullet}_{Iw}(E\overline{N}_1).$$

\begin{rem}\label{ALIwasawa}
    We know from Lemma \ref{torusequiv} that there is a $T_0$-action on the Iwasawa cohomology groups above. It follows from the definitions that $\AL^*$ intertwines the action of $T_0$ on $H^{\bullet}_{Iw}(EN_0)$ with the action of $T_0$ conjugated by $w_0$ on $H^{\bullet}_{Iw}(E\overline{N}_1)$.
\end{rem}

Consider the composition $$\sigma'=p^{\langle\eta,\lambda\rangle}(w\tau)^{-1}\circ\sigma^{w\tau}\colon U/E\overline{N}_1\to (\LL_{\lambda})^{w\tau}\to \LL_{\lambda},$$
it is given by
\begin{equation}\label{ALorbit}
    [g]\mapsto g\cdot p^{\langle\eta,\lambda\rangle}(w\tau)^{-1}f^{hw}_{\lambda}=p^{\langle \eta,\lambda-w_0\lambda\rangle}g\cdot w^{-1} f_{\lambda}^{hw}.
\end{equation}

We recall the following concept defined in \cite[Definition 5.1.1]{LRZ} based on \cite[Sec 12.2.2] {Kings2015}.
\begin{defin}\label{moment}
    Denote by  $$\mom^{\lambda}\colon H^{\bullet}_{Iw,(c)}(EN_0)\to H^{\bullet}_{(c)}(Y,\LL_{\lambda}), \; \widetilde{\mom}^{\lambda} H^{\bullet}_{Iw,(c)}(E\overline{N}_1)\to H^{\bullet}_{(c)}(Y,\LL_{\lambda})$$ 
    the maps induced by $\sigma$, (resp $\sigma'$) from the corresponding (compactly supported) Iwasawa cohomology groups. They are called moment maps.
\end{defin}

\begin{rem}
    As the map $\sigma$ depends on the choice of a highest weight vector $f^{hw}_{\lambda}$ the moment maps depend on this choice as well.
\end{rem}

\begin{lemma}\label{Atkmom}
   The diagram
    
\begin{tikzcd}
H^{\bullet}_{Iw}(EN_0) \arrow [r,"\AL^*"]\arrow[d,"\mom^{\lambda}"]& H^{\bullet}_{Iw}(E\overline{N}_1) \arrow[d,"\widetilde{\mom}^{\lambda}"]\\
H^{\bullet}(Y,\LL_{\lambda}) \arrow [r,"\AL_{\lambda}"] & H^{\bullet}(Y,\LL_{\lambda})
\end{tikzcd}

is commutative.
\end{lemma}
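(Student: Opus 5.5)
The commutativity will come from one statement about $U/E$-sets, namely that $\sigma'$ is the $w\tau$-conjugate of $\sigma$ followed by the normalised intertwiner. That statement is then pushed through the functor $\Loc$ and through the construction of moment maps, which is natural in the coefficient map. The whole argument is a matter of naturality; nothing needs to be computed in cohomology.

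First recall how $\mom^{\lambda}$ is built, following \cite[Definition 5.1.1]{LRZ}. At finite level $U'\supset EN_0$, one views $H^{\bullet}(Y(U^pU'),\Z_p/p^r)$ as $H^{\bullet}(Y,\Loc(U/U')\otimes\Z_p/p^r)$, using $\pr$ and Shapiro's lemma. The map of $U$-sets $U/U'\to \LL_{\lambda}/p^r$ induced by $\sigma$ (via Kings' construction) then gives a map to $H^{\bullet}(Y,\LL_{\lambda}/p^r)$, and one passes to the limit over $U'$ and $r$. Thus $\mom^{\lambda}$ is the image under a natural transformation of the datum $(EN_0,\sigma)$, and $\widetilde{\mom}^{\lambda}$ is the image of $(E\overline{N}_1,\sigma')$.

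Second, apply the conjugation functor $(-)^{w\tau}$ to the whole datum $(U/U',\sigma)$. By Lemma \ref{ALconj}, pulling back along $\AL$ identifies $H^{\bullet}(Y,\Loc(S))$ with $H^{\bullet}(Y,\Loc(S^{w\tau}))$ functorially in $S$. This uses that $\AL$ is an automorphism of $Y$, so $\AL^*$ is an isomorphism on cohomology, also with compact supports. Applying this to $S=U/U'$, with $(U/U')^{w\tau}\simeq U/(w\tau)U'(w\tau)^{-1}$, recovers exactly the isomorphism $\AL^*\colon H^{\bullet}_{Iw}(EN_0)\simeq H^{\bullet}_{Iw}(E\overline{N}_1)$. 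Applying it to the coefficients gives $H^{\bullet}(Y,\LL_\lambda)\to H^{\bullet}(Y,\AL^*\LL_\lambda)=H^{\bullet}(Y,\LL_\lambda^{w\tau})$. Naturality of the moment construction then shows that the conjugate moment map (built from $\sigma^{w\tau}\colon U/E\overline{N}_1\to\LL_\lambda^{w\tau}$) composed with $\AL^*$ on Iwasawa cohomology equals $\AL^*\circ\mom^{\lambda}$. One also has to check that the trace maps in the tower are compatible with these identifications. This holds because conjugation by $w\tau$ carries the cofinal system of subgroups containing $EN_0$ bijectively onto the system containing $E\overline{N}_1$, and commutes with corestriction.

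Third, post-compose with the $G$-equivariant map $p^{\langle\eta,\lambda\rangle}(w\tau)^{-1}\colon \LL_\lambda^{w\tau}\to\LL_\lambda$, which is integral by Lemma \ref{normalise}. On the target side this composite is by Definition \ref{Atk-Leh} exactly $\AL_\lambda\circ\mom^{\lambda}$. On the source side, naturality in the coefficient map says that the moment map built from $\sigma^{w\tau}$, followed by this equivariant map, is the moment map built from $p^{\langle\eta,\lambda\rangle}(w\tau)^{-1}\circ\sigma^{w\tau}=\sigma'$, i.e.\ $\widetilde{\mom}^{\lambda}$. This gives the diagram.

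The main obstacle is the bookkeeping in the second step. One must check that the identification of Lemma \ref{ALconj} at each finite level is compatible both with the Shapiro-type isomorphism and with the trace maps, so that it really induces the stated $\AL^*$ on the inverse limits. One must also make sure that conjugation by $w\tau$ (which is not in $U$) is being used consistently on both sides. I would handle this by writing everything at the level of $U/E$-sets and torsors over $\widetilde Y$, where every identification is a literal equality of fibre products.
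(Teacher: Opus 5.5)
Your proposal is correct and takes the same approach as the paper. The paper also writes $\sigma'=p^{\langle\eta,\lambda\rangle}(w\tau)^{-1}\circ\sigma^{w\tau}$, so that $\widetilde{\mom}^{\lambda}$ equals $\AL^*(\mom^{\lambda})$ followed by the normalised intertwiner, and concludes because $\AL^*$ followed by that intertwiner is $\AL_\lambda$ by Definition~\ref{Atk-Leh}; your extra bookkeeping (compatibility of the identifications of Lemma~\ref{ALconj} with Shapiro's lemma and the trace maps) just spells out what the paper treats as immediate from the definitions.
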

\begin{proof}
    By definition, $\sigma'$ is the composition of $\sigma^{w\tau}$ and $p^{\langle\eta,\lambda\rangle}(w\tau)^{-1}$. So $\widetilde{\mom}^{\lambda}$ is the composition of $\AL^*(\mom^{\lambda})$ and $p^{\langle\eta,\lambda\rangle}(w\tau)^{-1}$ i.e. we have the following diagram with a commutative upper square:
    
\begin{tikzcd}
H^{\bullet}_{Iw}(EN_0) \arrow [r,"\AL^*"]\arrow[d,"\mom^{\lambda}"]& H^{\bullet}_{Iw}(E\overline{N}_1) \arrow[d,"\AL^*({\mom}^{\lambda})"]\\
H^{\bullet}(Y,\LL_{\lambda}) \arrow [r,"\AL^*"] & H^{\bullet}(Y,\AL^*(\LL_{\lambda}))\arrow[d,"p^{\langle\eta,\lambda\rangle}(w\tau)^{-1}"]\\
& H^{\bullet}(Y,\LL_{\lambda}).
\end{tikzcd}\newline
The claim follows as the composition of the lowest arrow with $\AL^*$ is $\AL_{\lambda}$. 
\end{proof}

\begin{rem}\label{changelevel} More generally, the results stay the same if we change the level subgroup $U$ to another one admitting Iwahori decomposition and stable under conjugation by $w\tau$. Furthermore, one can replace $N_0$ and $\overline{N_1}$ by a subgroup $N_0'$ such that $T_UN_U\supset N_0'\supset N_0$ and its $w\tau$-conjugate subgroup if one considers weights which are trivial on $N'_0/N_0$
\end{rem}

\subsection{Duality for the Iwasawa cohomology}
As in Definition \ref{moment} denote by $\overline{\mom}^{\lambda^{\vee}}$ the moment map induced by $\overline\sigma_{\lambda^{\vee}}$. We also denote by $\underline{X}$ the constant sheaf associated to the set $X$. The character $\lambda\colon T_0/E\to \cO^\times$ induces an $\cO$-linear map $\cO[[T_0/E]]\to \cO$ and we denote analogously by $\mom^{\lambda}$ the corresponding map on cohomology with values in corresponding constant sheaves.
\begin{prop}\label{dualityprep} Let $d$ be an arbitrary integer. The following diagram is commutative:

\begin{tikzcd}[column sep=huge]
H^{\bullet}_{Iw,c}(EN_0)\otimes  H^{d-\bullet}_{Iw}(E\overline{N}_1) \arrow [r,"{{\mom}^{\lambda},{\overline{\mom}}^{{\lambda}^\vee}}"]\arrow[d,"\pi_*"]& H_c^{\bullet}(Y,\LL_{\lambda})\otimes H^{d-\bullet}(Y,\LL_{\lambda}^{\vee}) \arrow[d,"{(\cdot,\cdot)}"]\\
H_c^d(Y,\underline{\cO[[T_0/E]]}) \arrow[r,"{\mom}^\lambda"]& H_c^d(Y,\cO).
\end{tikzcd}

\end{prop}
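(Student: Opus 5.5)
We first reinterpret the Iwasawa cohomology groups as cohomology of $Y$ with coefficients in pro-sheaves. By Shapiro's lemma (finite étale pushforward along the coverings $Y(U^pU')\to Y$), there are identifications
\[
H^{\bullet}_{Iw,(c)}(EN_0)\simeq \varprojlim_{U'} H^{\bullet}_{(c)}\bigl(Y,\Z_p[\Loc(U/U')]\bigr)\simeq H^{\bullet}_{(c)}\bigl(Y,\Z_p[[\Loc(U/EN_0)]]\bigr),
\]
under which the trace maps correspond to the pushforwards $\Z_p[U/U']\to \Z_p[U/U'']$. The same holds for $E\overline{N}_1$ and $\Z_p[[\Loc(U/E\overline{N}_1)]]$. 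With this description we interpret the maps in the square as follows.
\begin{itemize}
\item The moment maps $\mom^{\lambda}$ and $\overline{\mom}^{\lambda^\vee}$ are induced by the $\cO$-linear extensions of $\sigma_\lambda$ and $\overline{\sigma}_{\lambda^\vee}$ to maps of sheaves $\cO[[U/EN_0]]\to\LL_\lambda$ and $\cO[[U/E\overline{N}_1]]\to\LL_\lambda^\vee$. These are morphisms of $U/E$-modules, hence of local systems after applying $\Loc$.
\item The left vertical map $\pi_*$ is the cup product on $Y$ for the pro-sheaves, followed by the map of coefficient sheaves
\[
\Z_p[[U/EN_0]]\otimes \Z_p[[U/E\overline{N}_1]]\to \Z_p[[U/EN_0\times U/E\overline{N}_1]]\xrightarrow{\pi_*}\underline{\cO[[T_0/E]]}.
\]
Here $\pi$ is the $U$-equivariant map of Corollary~\ref{diagsetsmain}, and $T_0/E$ carries the trivial $U$-action, so the target is a constant sheaf. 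Before this is used, it has to be checked that $\pi$ and the cup product are compatible with the transition maps at finite level, so that $\pi_*$ really is defined on the inverse limit.
\end{itemize}

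The key step is then a general naturality statement. Suppose we are given $U/E$-modules $M_1,M_2,M_3$ and $M_1',M_2',M_3'$, maps $\alpha_i\colon M_i\to M_i'$, and $U$-equivariant bilinear pairings $M_1\times M_2\to M_3$ and $M_1'\times M_2'\to M_3'$ that are intertwined by the $\alpha_i$. Then the induced cup products
\[
H^\bullet_c(Y,\Loc M_1)\otimes H^{d-\bullet}(Y,\Loc M_2)\to H^d_c(Y,\Loc M_3)
\]
and their primed analogues are intertwined by $\alpha_{i*}$. This holds because the cup product is natural in the coefficient pairing. We apply it with the following choices.
\begin{itemize}
\item $M_1=\cO[[U/EN_0]]$ and $M_2=\cO[[U/E\overline{N}_1]]$, with $M_3=\cO[[T_0/E]]$ and pairing given by the linearisation of $\pi$.
\item $M_1'=\LL_\lambda$, $M_2'=\LL_\lambda^\vee$ and $M_3'=\cO$, with pairing $ev$.
\item The maps $\alpha_1,\alpha_2$ are the linearised $\sigma_\lambda$ and $\overline{\sigma}_{\lambda^\vee}$, and $\alpha_3=\mom^\lambda$ is induced by the character $\lambda$.
\end{itemize}
The required compatibility of pairings, namely $ev(\sigma_\lambda(x),\overline{\sigma}_{\lambda^\vee}(y))=\lambda(\pi(x,y))$, holds on basis elements by Corollary~\ref{diagsetsmain}. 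It then extends to the completed group rings by bilinearity and continuity.

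The main obstacle is the passage to the limit. The pro-sheaf formalism requires that cup products commute with inverse limits. It also requires that $\pi_*$ on each finite level is compatible with traces; concretely, pushing forward along $U/U'\to U/U''$ must commute with the linearised pairing followed by $\pi_*$. The plan is to prove the whole statement at each finite level $U'\supset EN_0$ (with the corresponding level for $\overline N_1$). On a finite level the moment map factors as the reduction $\cO[U/U'N_0]\to\LL_\lambda/p^r$ for suitable $r$, and the diagram commutes modulo $p^r$ by the finite-set argument above. We then take the inverse limit over $r$ and $U'$, which is exact enough here because all groups are finitely generated $\Z_p$-modules (Mittag-Leffler holds). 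Both the finite-level compatibility and this limit argument should be checked separately in each cohomology theory (Betti, étale); each reduces to standard functoriality of the cup product and of trace maps for finite étale covers.
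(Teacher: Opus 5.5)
Your proposal is correct and takes the same approach as the paper. The paper's proof is the one-line observation that the square comes from taking cohomology of the (pro-)sheaves in the commutative diagram of Corollary~\ref{diagsetsmain}; that is exactly your naturality-of-the-cup-product argument, applied to the pairings given by $\pi$ and $ev$, which are intertwined by $\sigma_\lambda$, $\overline{\sigma}_{\lambda^\vee}$ and $\lambda$. What you add is detail the paper leaves implicit: the Shapiro-lemma description of $H^{\bullet}_{Iw,(c)}$ as cohomology of pro-sheaves, the explicit definition of $\pi_*$ as cup product followed by pushforward along $\pi$, and the finite-level, mod $p^r$ verification followed by passage to the limit.
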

\begin{proof}
  It follows via taking cohomology of the sheaves in the commutative diagram from Corollary  \ref{diagsetsmain}.
\end{proof}

\begin{rem}
    One has $H_{c}^d(Y,\underline{\cO[[T_0/E]]})=\cO[[T_0/E]]\otimes H_{c}^d(Y,\cO)$. The result is valid for arbitrary $d$, however, for potential applications most interesting cases seem to be $d=\dim(Y)$ where we take the real dimension of the locally symmetric space and consider geometric \'etale or Betti cohomology (in which case $H_{c}^d(Y,\cO)$ is free of rank $1$), or $d=2\dim(Y)+1$ if $Y$ is an $\cO_F[S^{-1}]$ model of a Shimura variety of abelian type, $\dim(Y)$ is its relative dimension over $\cO_F[S^{-1}]$ and we consider its absolute \'etale cohomology.
\end{rem}

Comparing Iwasawa cohomology of $EN_0$ and $E\overline{N_1}$ via the Atkin--Lehner map we obtain our main result. To fix normalisations, we set $\mom^{\lambda^{\vee}}$ to be the moment map associated to the choice of the highest vector $f^{hw}_{\lambda^{\vee}}\in \L_{\lambda}^{\vee}$ with the property that $(wf_{\lambda}^{hw},f_{\lambda^{\vee}}^{hw})=1$. Set
$$c_{\lambda}:=p^{\langle \eta,\lambda-w_0\lambda\rangle}.$$
Recall that the highest weight $\lambda^{\vee}$ of $V_{\lambda}^{\vee}$ is equal to $-w_0\lambda$, so $c_{\lambda}=c_{\lambda^{\vee}}$.
\begin{theor}\label{dualitymain}
There is a commutative diagram

\begin{tikzcd}[column sep=huge]
H^{\bullet}_{Iw,c}(EN_0)\otimes  H^{d-\bullet}_{Iw}(EN_0) \arrow [r,"{{\mom}^{\lambda},{\mom}^{\lambda^{\vee}}}"]\arrow[d,"d_{Iw}"]& H_c^{\bullet}(Y,\LL_{\lambda})\otimes H^{d-\bullet}(Y,\LL_{\lambda}^{\vee}) \arrow[d,"{(\cdot,\AL\cdot)}"]\\
H_c^d(Y,\underline{\cO[[T_0/E]]}) \arrow[r,"c_{\lambda}\cdot\mom^{\lambda}"]& H_c^d(Y,\cO).
\end{tikzcd}

\end{theor}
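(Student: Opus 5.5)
We define $d_{\Iw}$ as the composite of the Atkin--Lehner isomorphism on the second factor with the pairing $\pi_*$ of Proposition~\ref{dualityprep}:
\[
d_{\Iw}(x,y) := \pi_*\bigl(x,\AL^*(y)\bigr),\qquad x\in H^{\bullet}_{\Iw,c}(EN_0),\ y\in H^{d-\bullet}_{\Iw}(EN_0),
\]
where $\AL^*\colon H^{d-\bullet}_{\Iw}(EN_0)\xrightarrow{\sim} H^{d-\bullet}_{\Iw}(E\overline{N}_1)$ is the isomorphism from Section~\ref{A-Lsec}. (We apply this to the weight $\lambda^\vee$ on the second factor.) Proposition~\ref{dualityprep} then gives
\[
\mom^{\lambda}\bigl(d_{\Iw}(x,y)\bigr)=\bigl(\mom^{\lambda}x,\ \overline{\mom}^{\lambda^\vee}(\AL^* y)\bigr).
\]
It therefore suffices to prove
\[
c_\lambda\,\overline{\mom}^{\lambda^\vee}\circ \AL^* \;=\; \AL_{\lambda^\vee}\circ \mom^{\lambda^\vee}
\]
as maps $H^{\bullet}_{\Iw}(EN_0)\to H^{\bullet}(Y,\LL_\lambda^\vee)$.

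Lemma~\ref{Atkmom}, applied to the weight $\lambda^\vee$, gives $\AL_{\lambda^\vee}\circ\mom^{\lambda^\vee}=\widetilde{\mom}^{\lambda^\vee}\circ\AL^*$. So we only need to compare the two sections $U/E\overline{N}_1\to\LL_\lambda^\vee$, namely $\sigma'$ (built from $f^{hw}_{\lambda^\vee}$) and $\overline{\sigma}_{\lambda^\vee}$ (built from $f^{lw}_{\lambda^\vee}$). The claim reduces to the identity $\sigma'=c_\lambda\,\overline{\sigma}_{\lambda^\vee}$.

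By \eqref{ALorbit} with $\lambda^\vee$ in place of $\lambda$,
\[
\sigma'([g])=p^{\langle\eta,\lambda^\vee-w_0\lambda^\vee\rangle}\,g\,w^{-1}f^{hw}_{\lambda^\vee}.
\]
Since $\lambda^\vee=-w_0\lambda$, the exponent is $\langle\eta,\lambda-w_0\lambda\rangle$, so the prefactor is $c_{\lambda^\vee}=c_\lambda$. It remains to show $w^{-1}f^{hw}_{\lambda^\vee}=f^{lw}_{\lambda^\vee}$. The vector $w^{-1}f^{hw}_{\lambda^\vee}$ has weight $w_0(-w_0\lambda)=-\lambda$, which is the lowest weight of $V_\lambda^\vee$. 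That weight space is one-dimensional, so $w^{-1}f^{hw}_{\lambda^\vee}$ is a scalar multiple of $f^{lw}_{\lambda^\vee}$. The scalar is $(f^{hw}_\lambda,\,w^{-1}f^{hw}_{\lambda^\vee})$, and by $G$-invariance of the pairing this equals $(wf^{hw}_\lambda,f^{hw}_{\lambda^\vee})=1$ by our normalisation. Hence $w^{-1}f^{hw}_{\lambda^\vee}=f^{lw}_{\lambda^\vee}$, giving $\sigma'=c_\lambda\overline{\sigma}_{\lambda^\vee}$, and applying cohomology yields the diagram.

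The main obstacle is bookkeeping rather than mathematics. One must check that Lemma~\ref{Atkmom} and formula \eqref{ALorbit} really apply to the weight $\lambda^\vee$, including that $\lambda^\vee$ is trivial on $E$ (which holds since $E$ is central). One must also track that $w$ versus $w^{-1}$ and $\tau$ act on the correct side, which requires $w^2$ to be central (so $w$ and $w^{-1}$ act compatibly on weight spaces) and $\langle\eta,w_0\mu\rangle=\langle w_0\eta,\mu\rangle$ to be used consistently. If the sign or inverse conventions turn out to be off, the fallback is to compute $\sigma'$ directly from Definition~\ref{Atk-Leh} rather than quoting \eqref{ALorbit}.
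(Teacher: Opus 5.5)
Your proposal is correct and is essentially the paper's proof. You define $d_{\Iw}$ as $\pi_*\circ(\Id\otimes\AL^*)$, combine Proposition~\ref{dualityprep} with Lemma~\ref{Atkmom} for $\lambda^\vee$, and use \eqref{ALorbit} to get $\widetilde{\mom}^{\lambda^\vee}=c_\lambda\,\overline{\mom}^{\lambda^\vee}$. Your explicit check that $w^{-1}f^{hw}_{\lambda^\vee}=f^{lw}_{\lambda^\vee}$, via one-dimensionality of the lowest weight space and $G$-invariance of the pairing, is exactly what the paper compresses into ``with our choice of normalisation''.
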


\begin{proof} As the highest weight $\lambda^{\vee}$ of $V_{\lambda}^{\vee}$ is equal to $-w_0\lambda$, it follows from (\ref{ALorbit}) applied to $\lambda^{\vee}$ that with our choice of normalisation we have  ${\widetilde{\mom}^{\lambda^\vee}=c_{\lambda}\cdot\overline{\mom}^{\lambda^{\vee}}}$. Now, by Proposition \ref{dualityprep} the diagram

\begin{tikzcd}[column sep=huge]
H^{\bullet}_{Iw,c}(EN_0)\otimes  H^{d-\bullet}_{Iw}(E\overline {N}_1) \arrow [r,"{{\mom}^{\lambda},\widetilde{\mom}^{\lambda^{\vee}}}"]\arrow[d,"\pi"]& H_c^{\bullet}(Y,\LL_{\lambda})\otimes H^{d-\bullet}(Y,\LL_{\lambda}^\vee) \arrow[d,"{(\cdot,\cdot)}"]\\
H_c^d(Y,\underline{\cO[[T_0/E]]}) \arrow[r,"c_{\lambda}\cdot\mom^{\lambda}"]& H_c^d(Y,\cO)
\end{tikzcd}\newline
commutes. Hence it follows from Lemma \ref{Atkmom} (applied to $\lambda^{\vee}$) that if we define $d_{Iw}$ to be the composition of $\pi$ and $\Id\otimes \AL^*$ the resulting diagram is commutative.
\end{proof}

\begin{rem}\label{Iwasawaequiv}
The involution $t\mapsto w_0(t^{-1})$ of $T$ determines an automorphism of $\cO[[T_0/E]]$. It follows from Lemma ~\ref{torusequiv} and Remark~\ref{ALIwasawa} and that if we consider the action of $\cO[[T_0/E]]$ twisted by this involution on $H^{d-\bullet}_{Iw}(E\overline {N}_1)$, the left vertical arrow becomes $\cO[[T_0/E]]$-bilinear. 

Furthermore, as in Remark \ref{changelevel} the statements remain true if we replace $N_0$ and $\overline{N}_1$ by $N_0'$ and its $w\tau$-conjugate subject to considering weights $\lambda$, such that $\lambda$ and $-w_0\lambda$ are trivial on $N'_0/N_0$.

One can also adapt the construction to work in a more general setting allowing Iwasawa cohomology associated to the unipotent radical of a parabolic whose Levi stable under $w$-conjugation.
\end{rem}

\subsection{Duality for anti-ordinary cohomology}

Although our result is stated for the whole cohomology groups involved, the main application we have in mind is concerned with anti-ordinary parts thereof so here we briefly explain why one can take anti-ordinary parts of cohomology groups in Theorem \ref{dualitymain} and the statement remains valid. The (anti)-ordinary subgroup of $H_{(c)}^{\bullet}(Y,\LL_\lambda)$ is the maximal subgroup on which the optimally integrally normalised Hecke operator associated to the double coset $[U\eta(p)U]$ (resp. $[U\eta(p^{-1})U]$) acts invertibly. Recall the formal definition (the first part is \cite[Definition 2.9]{LRZ}):

\begin{defin}\hspace{0cm}\begin{enumerate}
    \item[(i)] For any strictly dominant $\eta_0$ define $\mathcal{T}'_{\eta_0}$ to be $p^{\langle \eta_0,\lambda\rangle}[U\eta_{0}(p^{-1})U]$.
    \item [(ii)] For any strictly dominant $\eta_0$ define $\mathcal{T}_{\eta_0}$ to be $p^{\langle \eta_0,-w_0\lambda\rangle}[U\eta_{0}(p)U]$.
\end{enumerate}
\end{defin}
 These operators are defined on $H^{\bullet}_{(c)}(Y,\L_{\lambda})$ and one defines the (anti)-ordinary projector $e$ (resp. $e'$) to be $\lim \mathcal{T}_{\eta_0}^{n!}$ (resp. $\lim (\mathcal{T}_{\eta_0}')^{n!}$), these do not depend on the choice of $\eta_0$ (see \cite[Proposition 2.12]{LRZ} for details; the proof for $e$ is identical).

\begin{lemma} We have
    $e'\circ\AL=\AL\circ e$ and $e\circ\AL=\AL\circ e'$.
\end{lemma}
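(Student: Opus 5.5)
We first reduce the statement to an identity of Hecke operators, and only then pass to projectors. The plan is to show that $\AL$ intertwines the two normalised Hecke operators: there is a strictly dominant $\eta_0$ for which
\[
\AL_\lambda\circ\mathcal{T}_{\eta_0}=\mathcal{T}'_{\eta_0}\circ\AL_\lambda ,
\]
possibly up to an invertible central Hecke operator coming from $Z(G)(\Q_p)$. Then we take $n!$-th powers and pass to the limit; the limit commutes with composition by the bijective continuous map $\AL$. Since the projectors do not depend on the choice of $\eta_0$, this gives $e'\circ\AL=\AL\circ e$. The second identity $e\circ\AL=\AL\circ e'$ follows in the same way, or from the first applied to $\AL^{-1}$. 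Note that $\AL^2$ is an invertible operator commuting with all Hecke operators, so $\AL^{-1}$ differs from $\AL$ by something commuting with $e$ and $e'$.

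\textbf{Step 1: the double coset.} As a correspondence on $Y$, $\AL$ is right translation by $(w\tau)^{-1}$, which normalises $U$ by Corollary \ref{norm}. Conjugating the correspondence $[U\eta_0(p)U]$ by it therefore gives
\[
[U (w\tau)^{-1}\eta_0(p)(w\tau) U]=[U\,\tau^{-1}w^{-1}\eta_0(p)w\,\tau\, U].
\]
Now $\tau$ lies in $T$, which is commutative, and $w^{-1}\eta_0(p)w=(w_0\eta_0)(p)$. Hence the double coset equals $[U(w_0\eta_0)(p)U]$.

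We choose $\eta_0=\eta$. Then $w_0\eta=-\eta+\zeta$ for a central cocharacter $\zeta=\eta+w_0\eta$, so
\[
(w_0\eta)(p)=\eta(p)^{-1}\zeta(p).
\]
The element $\zeta(p)$ is central and acts on $Y$ as a central translation, i.e.\ as an invertible Hecke operator commuting with everything. Thus, up to this central factor, $\AL$ carries $[U\eta(p)U]$ to $[U\eta(p^{-1})U]$.

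\textbf{Step 2: normalising scalars (the main obstacle).} We must check that the integral normalisations match. The correspondences carry coefficient maps on $\LL_\lambda$ given by the action of $\eta(p)^{\pm1}$, and $\AL_\lambda$ includes the factor $p^{\langle\eta,\lambda\rangle}(w\tau)^{-1}$. Conjugating the coefficient map $\eta(p)$ by $(w\tau)^{-1}$ produces $(w_0\eta)(p)$ acting on $\LL_\lambda$. The normalising scalar $p^{\langle\eta,-w_0\lambda\rangle}$ of $\mathcal{T}_\eta$ should become $p^{\langle -w_0\eta,\lambda\rangle}$, which equals $p^{\langle\eta,\lambda\rangle}$ up to the central contribution $p^{\langle\zeta,\lambda\rangle}$. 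The central element $\zeta(p)$ acts on $V_\lambda$ by the scalar $p^{\langle \zeta,\lambda\rangle}$, which absorbs this discrepancy.

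We would do this bookkeeping on the level of the sheaf maps, exactly as in Lemma \ref{Atkmom}. The goal is to verify that the two compositions of sheaf maps on $\LL_\lambda$ agree, not merely that they agree up to a unit. Even if they only agree up to a $p$-adic unit times a central operator, the conclusion about the projectors still holds, since invertible central operators commuting with everything do not change the subspace on which an operator acts invertibly.

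\textbf{Step 3: conclusion.} Let $C$ denote the invertible central operator from Steps 1 and 2, so that $\AL\,\mathcal{T}_\eta\,\AL^{-1}=C\,\mathcal{T}'_\eta$. Then
\[
\AL\,e\,\AL^{-1}=\lim_n \bigl(C\mathcal{T}'_\eta\bigr)^{n!}.
\]
This limit is an idempotent, and it cuts out the maximal summand on which $\mathcal{T}'_\eta$ is invertible, because $C$ is invertible and commutes with $\mathcal{T}'_\eta$. That summand is exactly the image of $e'$, so the idempotent equals $e'$. This gives $e'\circ\AL=\AL\circ e$, and the symmetric argument gives the other identity.
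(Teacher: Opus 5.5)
Your core computation is correct and close to the paper's: conjugate the double coset by $w\tau$, match the normalising powers of $p$, and pass to the limit. The difference is how you handle $(w_0\eta_0)(p)$.

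The paper keeps $\eta_0$ arbitrary and rewrites $(w_0\eta_0)(p)=(-w_0\eta_0)(p^{-1})$, where $-w_0\eta_0$ is again strictly dominant. Together with $\langle\eta_0,\lambda\rangle=\langle -w_0\eta_0,-w_0\lambda\rangle$, this gives the exact identity $\mathcal{T}'_{\eta_0}\circ\AL=\AL\circ\mathcal{T}_{-w_0\eta_0}$ with no correction term. Independence of $e$ from the strictly dominant cocharacter then finishes the proof.

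You instead fix $\eta_0=\eta$ and use that $\zeta=\eta+w_0\eta$ is central. This also works, and exactly: since $p^{\langle -w_0\eta,\lambda\rangle}\cdot p^{\langle\zeta,\lambda\rangle}=p^{\langle\eta,\lambda\rangle}$, your $C$ is translation by $\zeta(p)$ with identity coefficient map. That is an honest integral automorphism commuting with $\mathcal{T}'_\eta$. Your Step 3 then amounts to reproving the independence of the projector for the pair $\eta$ and $\eta-\zeta=-w_0\eta$. So your route is sound, but it depends on the special shape of $\eta$ and needs an extra argument that the paper avoids.

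What must be corrected is your repeated use of $\AL^{-1}$. In general $\AL_\lambda$ is not invertible on integral cohomology. Its coefficient map $p^{\langle\eta,\lambda\rangle}(w\tau)^{-1}$ does not send $(\LL_\lambda)^{w\tau}$ onto $\LL_\lambda$. In fact $\AL_\lambda^2$ equals a unit times $c_\lambda=p^{\langle\eta,\lambda-w_0\lambda\rangle}$ times the automorphism given by translation by the central element $(w\tau)^{-2}$; for $\GL_2$ and $\lambda=(k,0)$ this factor is $(-1)^k p^k$.

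Consequently:
\begin{itemize}
\item $\AL$ is not bijective, and $\AL^2$ is not invertible (the paper only calls it an endomorphism).
\item The expressions $\AL\,\mathcal{T}_\eta\,\AL^{-1}$ and $\AL\, e\,\AL^{-1}$ are not defined on the integral groups, which may have torsion.
\item Your deduction of the second identity via $\AL^{-1}$ fails for the same reason.
\end{itemize}
Inverting $p$ would only give the identities modulo torsion.

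The repair is easy:
\begin{itemize}
\item Read Steps 1--2 as the intertwining identity $\AL\circ\mathcal{T}_\eta=C\,\mathcal{T}'_\eta\circ\AL$ of correspondences with coefficients; this needs no inverse.
\item Iterate to get $\AL\circ\mathcal{T}_\eta^{n!}=(C\mathcal{T}'_\eta)^{n!}\circ\AL$, and pass to the limit using only continuity of $\AL$.
\item The limit $\lim_n (C\mathcal{T}'_\eta)^{n!}$ exists and equals $e'$, because $C$ is an integral automorphism commuting with $\mathcal{T}'_\eta$.
\item For the second identity, do the symmetric computation with $[U\eta(p^{-1})U]$ directly.
\end{itemize}
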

\begin{proof}
One has
$$(w\tau)^{-1}\cdot\eta_0(p^{-1})\cdot w\tau=(w_0\eta_0)(p^{-1})=(-w_0(\eta_0))(p).$$

Since $w\tau$ normalises $U$ we deduce that $[U\eta_0(p^{-1})U]\circ \AL$ is the Hecke operator associated to the double coset $[U\eta_0(p)w\tau U]$ and $\AL\circ [U(-w_0(\eta_0)(p))U]$ is the Hecke operator associated to the double coset $[Uw\tau(-w_0(\eta_0))(p)U]$ which is the same double coset. Furthermore, as $\langle\eta_0,\lambda\rangle=\langle-w_0\eta_0,-w_0\lambda\rangle$ we deduce that $\mathcal{T}'_{\eta_0}\circ \AL=\AL\circ\mathcal{T}'_{-w_0\eta_0}$, so the first claim follows as $-w_0\eta_0$ is strictly dominant and $e$ does not depend on the choice of the strictly dominant cocharacter. The second claim follows similarly.
\end{proof}

\begin{cor}\label{dualityord}
The diagram from Theorem~\ref{dualitymain} induces a commutative diagram

\begin{tikzcd}[column sep=huge]
e'H^{\bullet}_{Iw,c}(EN_0)\otimes  e'H^{d-\bullet}_{Iw}(EN_0) \arrow [r,"{{\mom}^{\lambda},{\mom}^{\lambda^{\vee}}}"]\arrow[d,"d_{Iw}"]& e'H_c^{\bullet}(Y,\LL_{\lambda})\otimes e'H^{d-\bullet}(Y,\LL_{\lambda}^{\vee}) \arrow[d,"{(\cdot,\AL\cdot)}"]\\
H_c^d(Y,\underline{\cO[[T_0/E]]}) \arrow[r,"c_{\lambda}\cdot\mom^{\lambda}"]& H_c^d(Y,\cO).
\end{tikzcd}

\end{cor}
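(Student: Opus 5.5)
We restrict the diagram of Theorem~\ref{dualitymain} to the anti-ordinary parts and check that each arrow is compatible with the projectors. First we make sense of $e'$ on the Iwasawa cohomology groups. On each finite level $Y(U^pU')$ with $EN_0\subset U'\subset U$, the Hecke operator $[U\eta_0(p^{-1})U]$ (with $\lambda$ trivial, so no normalisation) acts. These actions are compatible with the trace maps, as in \cite{LRZ}. So $\mathcal{T}'_{\eta_0}$, and hence $e'=\lim(\mathcal{T}'_{\eta_0})^{n!}$, act on $H^{\bullet}_{Iw,(c)}(EN_0)$ as a projective limit of idempotents; this is compact, so the limit exists. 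The key input is the standard compatibility $\mom^{\lambda}\circ \mathcal{T}'_{\eta_0}=\mathcal{T}'_{\eta_0}\circ \mom^{\lambda}$, with the normalisation $p^{\langle\eta_0,\lambda\rangle}$ on the right-hand side; this is \cite{LRZ} (their compatibility of moment maps with $U'_p$-type operators). It gives $\mom^{\lambda}\circ e'=e'\circ\mom^{\lambda}$, and likewise for $\mom^{\lambda^\vee}$. Hence the top horizontal arrow maps $e'H_{Iw,c}\otimes e'H_{Iw}$ into $e'H_c(Y,\LL_\lambda)\otimes e'H(Y,\LL_\lambda^\vee)$.

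It then suffices to note that the induced diagram is the restriction of the diagram in Theorem~\ref{dualitymain} to the subgroup $e'H^{\bullet}_{Iw,c}(EN_0)\otimes e'H^{d-\bullet}_{Iw}(EN_0)$. Commutativity of a restricted diagram is automatic. We take $d_{Iw}$ in the corollary to be the restriction of the previously defined $d_{Iw}=\pi_*\circ(\Id\otimes\AL^*)$. The right vertical arrow $(\cdot,\AL\cdot)$ restricted to $e'\otimes e'$ is simply the restriction of the earlier pairing. So the corollary holds formally once the projectors commute with the moment maps.

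We also want to explain why this restricted pairing is the natural one, and why it loses nothing. By the preceding lemma, $\AL\circ e'=e\circ\AL$. The adjoint of $e$ under Poincar\'e duality is $e'$, since $[U\eta_0(p)U]$ and $[U\eta_0(p^{-1})U]$ are transposes and the normalisations agree because $\langle\eta_0,-w_0\lambda^\vee\rangle=\langle\eta_0,\lambda\rangle$. Therefore
\[
(e'x,\AL\, e'y)=(e'x,e\,\AL y)=(e'^2x,\AL y)=(e'x,\AL y).
\]
Thus the pairing $(\cdot,\AL\cdot)$ factors through the anti-ordinary projection in each variable. The analogous statement holds on the Iwasawa side: there $\AL^*$ intertwines $e'$ on $EN_0$ with $e$ on $E\overline N_1$, and $\pi_*$ is adjoint with respect to the Hecke transposes.

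The main obstacle is the compatibility of $e'$ with the moment maps, including the precise normalising powers of $p$ and the existence of $e'$ on the inverse limit. If this is not directly quotable from \cite{LRZ}, I would verify it by a double coset computation: use the Iwahori decomposition of $U$, and use the fact that $\eta_0(p^{-1})$ contracts $N_0$ and expands $\overline N_1$. Then I would check on $f^{hw}_\lambda$ that the orbit map $\sigma_\lambda$ transforms under $\eta_0(p^{-1})$ by the scalar $\lambda(\eta_0(p^{-1}))=p^{-\langle\eta_0,\lambda\rangle}$. This scalar is exactly cancelled by the normalisation in $\mathcal{T}'_{\eta_0}$.
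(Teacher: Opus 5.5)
Your proposal is correct and is essentially the paper's argument. The corollary is the restriction of the diagram of Theorem~\ref{dualitymain}, which is legitimate because the moment maps commute with $e'$ (as in \cite{LRZ}), and the lemma $e'\circ\AL=\AL\circ e$, $e\circ\AL=\AL\circ e'$ plays exactly the role you give it in your third paragraph: it identifies $(\cdot,\AL\cdot)$ on anti-ordinary parts with the Poincar\'e pairing of $e'H_c$ against $eH$. Your extra claim that $\pi_*$ is adjoint for Hecke transposes on the Iwasawa side is not needed for commutativity and is only asserted, so leave it out or justify it separately.
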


\section{Examples and applications}\label{exappl}
In this section we briefly outline how the pairing constructed in \cite{OhtES} (or rather a variant from \cite{Ohta2003}) for modular curves relates to constructions. Furthermore, we consider an example of $p$-adic Hirzebruch--Zagier cycles from \cite{CaNiRo} and show that (after suitable projection to the cuspidal part) the intersection index of their specialisations at even parallel weights against their Atkin--Lehner images interpolates $p$-adically. The latter example is motivated by the discussion after \cite[Definition~7.10]{CaNiRo}.
\subsection{Relation to Ohta's pairing}
We now show how the results above recover the pairing of Ohta for $G=\GL_2$ (so we are in Milne's SV5 setting and $E$ is trivial). For this subsection denote $\Lambda:=\ZZ_p[[\ZZ_p^{\times}]]$ and set $U^{p}=U_1(M)$. Denote by $Y_r$ the modular curve of level $\Gamma_1(M)\cap\Gamma_1(p^{r})$ and by $Y$ the modular curve of level $\Gamma_1(M)\cap\Gamma_0(p)$. Ohta considers the \'etale cohomology of the tower $\{Y_{r,\overline{\Q}}\}$.

Set $\bH_{(c)}\coloneqq \varprojlim H^1_{\acute{e}t,(c)}(Y_{r,\overline{\Q}},\Z_p)$. The highest weight vector $x^{\otimes k}$ defines the map $$\mom^k\colon \bH_{(c)}\to H^1(Y(\Gamma_1(M)\cap\Gamma_0(p),\TSym^k(\Z_px\oplus \Z_py)).$$

We can take $ \tau= \begin{bmatrix}
    p & 0 \\
    0 & 1
\end{bmatrix},$ this choice yields $U=U_0(p)=\begin{bmatrix}
    * & * \\
    0 & *
\end{bmatrix} \pmod p$.

The Shimura variety of level 
$\begin{bmatrix}
    1 & * \\
    0 & 1
\end{bmatrix} \pmod {p^r}$ for $G$ is $Y_r\times_{\Q}\Q(\mu_{p^r})$, hence taking the inverse limit we obtain $H^1_{Iw,(c)}(N_0)\simeq \bH_{(c)}\hat{\otimes}\Lambda$. One can verify the following:

\begin{lemma}\label{action}
    Representing $T$ as the direct product of $T^0=T\cap SL_2$ and $T^1=\begin{bmatrix}
    * & 0 \\
    0 & 1
\end{bmatrix}$ under the above isomorphism the action of $(s,t)\in (T^0\times T^1)(\ZZ_p)$ is given by $\langle s\rangle\otimes [t]$.
\end{lemma}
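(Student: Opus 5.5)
The plan is to make the isomorphism $H^1_{Iw,(c)}(N_0)\simeq \bH_{(c)}\hat\otimes\Lambda$ explicit at each finite level and then read off how the diagonal torus acts. The action of $(s,t)$ on Iwasawa cohomology comes from right translation $r_{(s,t)}$ on the tower $\Loc(U/N_0)$, as in the remark preceding Lemma~\ref{torusequiv}.

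First I would fix the finite-level picture. Let $U_r$ be the group of matrices congruent to $\begin{bmatrix} * & * \\ 0 & *\end{bmatrix}$ modulo $p$ and to $\begin{bmatrix} 1 & * \\ 0 & 1\end{bmatrix}$ modulo $p^r$. The adelic double coset space $\GL_2(\Q)\backslash \GL_2(\A)/U_\infty U^pU_r$ has components indexed by $\widehat{\Z}^\times/\det(U^pU_r)=(\Z/p^r)^\times$ via strong approximation. Each component is a copy of $\Gamma_1(M)\cap\Gamma_1(p^r)\backslash\mathcal H$, and this gives the identification with $Y_r\times_\Q\Q(\mu_{p^r})$. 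In particular $H^1_{(c)}$ of the level-$U_r$ space is $H^1_{(c)}(Y_r)\otimes\Z_p[(\Z/p^r)^\times]$, functorially in $r$. Passing to the limit over trace maps gives the stated isomorphism, provided the transition maps match: the traces on the $Y_r$-factor and the projection on the group-ring factor.

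Next I would compute the two actions separately. For $s=\mathrm{diag}(a,a^{-1})\in T^0(\Z_p)$, right translation has determinant $1$, so it preserves each connected component. On a component it is induced by an element of $\Gamma_0(p^r)$ whose diagonal is $\equiv (a,a^{-1}) \bmod p^r$. This is by definition the diamond operator $\langle a\rangle$ (or $\langle a^{-1}\rangle$; I will match the paper's convention for $\langle s\rangle$) acting on $Y_r$, and it acts trivially on the $\Lambda$-factor. For $t=\mathrm{diag}(b,1)\in T^1(\Z_p)$, right translation multiplies the determinant by $b$ and so permutes the components by $[b]$. I would then check that the induced map between components is the identity on $Y_r$. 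To do this, write the adelic representative $\mathrm{diag}(b,1)_p$ as a rational element times something in $U^pU_r$, using $b\in\widehat{\Z}^\times$ and the $\GL_2(\Q)$-action. Note that $\mathrm{diag}(b,1)$ normalises $\begin{bmatrix}1&*\\0&1\end{bmatrix}$ but conjugates the unipotent entry by $b$. That conjugation is exactly what the $\Q(\mu_{p^r})$ Galois twist absorbs, so the net effect is $[t]$ on $\Lambda$ and trivial on $\bH$.

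The main obstacle is this last bookkeeping step: making sure the $T^1$ action contributes no residual diamond operator on $Y_r$ and no sign or inverse, so that it really is $\langle s\rangle\otimes[t]$ rather than, say, $\langle s\rangle\langle t\rangle\otimes[t^{-1}]$. I would handle it by choosing component representatives $g_c=\mathrm{diag}(c,1)_f$ for $c\in\widehat{\Z}^\times$ and computing $g_c\,\mathrm{diag}(b,1)=g_{cb}$ directly. This shows that $t$ maps the component of $c$ identically onto the component of $cb$, since both sides use the same arithmetic subgroup. Finally, the two actions commute and $T=T^0\times T^1$, so the action of $(s,t)$ is the product $\langle s\rangle\otimes[t]$.
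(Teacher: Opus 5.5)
The paper gives no proof of this lemma (it only says ``one can verify''), and your computation is a correct way to supply one. The substantive content is in your last paragraph together with the $T^0$ step:
\begin{itemize}
\item The representatives $g_c=\mathrm{diag}(c,1)$, $c\in\Z_p^\times$, normalise $U^pU_r$. So every component is $\Gamma_1(M)\cap\Gamma_1(p^r)\backslash\mathcal{H}$ via $z\mapsto[z,g_c]$, compatibly in $r$, since changing $c$ by a unit $\equiv 1\bmod p^r$ changes $g_c$ by an element of $U_r$.
\item The identity $g_ct=g_{cb}$ shows that $t$ only relabels components.
\item For $s$, use $g_cs=sg_c$ and write $s=\gamma k$ with $\gamma\in\mathrm{SL}_2(\Z)$, $\gamma\equiv s\bmod p^r$, $\gamma\equiv 1\bmod M$, $k\in U^pU_r$. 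This gives $[z,g_cs]=[\gamma^{-1}z,g_c]$, the same diamond operator on every component. Your sketch omitted the condition $\gamma\equiv 1\bmod M$, which is needed here.
\end{itemize}

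You should delete the middle of your $T^1$ paragraph. The element $\mathrm{diag}(b,1)_p$ cannot be written as a rational element times an element of $U^pU_r$: its determinant is $b$, which is exactly why it moves components. Also, no Galois twist by $\Q(\mu_{p^r})$ enters the computation over $\overline{\Q}$ or $\mathbb{C}$.

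It is worth saying explicitly that the lemma depends on this normalisation of the isomorphism $H^1_{Iw,(c)}(N_0)\simeq\bH_{(c)}\hat\otimes\Lambda$, which the paper leaves implicit. For example, $g_c=\mathrm{diag}(1,c)$ also normalises $U^pU_r$. With that choice $g_ct=\mathrm{diag}(b,b^{-1})\,g_{cb}$, so $t$ would act as $\langle\mathrm{diag}(b,b^{-1})\rangle\otimes[t]$ rather than $1\otimes[t]$.
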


\begin{prop}
 Restricting $d_{Iw}$ to the subspaces $\bH_{(c)}\otimes1_{\Lambda}\subset H^{1}_{Iw,c}(N_0)$ and taking $\lambda$ of the form $[k,0]$ we obtain a $\Lambda$--bilinear pairing between $\bH$ and $\bH_{c}$ valued in a free $\Lambda$-module of rank $1$. Taking ordinary parts recovers results from \cite[Theorem~1.3.3]{Ohta2003} for the weights of $\Lambda$ which are integers (i.e. arithmetic weights with trivial Dirichlet character of $p$-power conductor part).
\end{prop}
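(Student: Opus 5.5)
Specialise Theorem~\ref{dualitymain} to $G=\GL_2$, $\tau=\mathrm{diag}(p,1)$, $E=1$, and then compare the resulting pairing with Ohta's pairing term by term.

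\textbf{Step 1: setting up the pairing.} Take $\eta(x)=\mathrm{diag}(x,1)$, or rather its symmetrised version $\mathrm{diag}(x,x^{-1})$ times a central cocharacter; by the remark in Section~\ref{lvl} this yields the same $U=U_0(p)$. The Tits representative is $w=\begin{bmatrix}0&1\\-1&0\end{bmatrix}$, so $w\tau$ is, up to a central element, the classical Atkin--Lehner matrix $W_p$. For $\lambda=[k,0]$ we have $\lambda^\vee=[0,-k]$, and
\[
c_\lambda=p^{\langle\eta,\lambda-w_0\lambda\rangle}=p^{k}.
\]
This matches Ohta's normalisation of the Atkin--Lehner operator. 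Under the identification $H^1_{Iw,(c)}(N_0)\simeq\bH_{(c)}\hat\otimes\Lambda$, define the pairing on $\bH_c\times\bH$ by
\[
(x,y)\mapsto d_{Iw}(x\otimes1,\,y\otimes1)\in\Lambda\otimes H^2_c(Y,\Z_p).
\]
The target is $\Lambda\otimes H^2_c(Y,\Z_p)$ and $H^2_c(Y_{\overline\Q},\Z_p)\simeq\Z_p$ (up to Tate twist and components, which are absorbed into the $\mu_{p^\infty}$ factor), so the target is free of rank one over $\Lambda$.

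\textbf{Step 2: $\Lambda$-bilinearity.} By Remark~\ref{Iwasawaequiv}, $d_{Iw}$ is bilinear for the $\cO[[T_0]]$-action twisted by $t\mapsto w_0(t^{-1})$ on the second factor. Apply Lemma~\ref{action}, which says $(s,t)\in T^0\times T^1$ acts by $\langle s\rangle\otimes[t]$. Restricting to $\bH\otimes1$ discards the $T^1$-factor, so only the diamond action of $T^0\simeq\Z_p^\times$ remains. The involution $t\mapsto w_0(t^{-1})$ preserves $\mathrm{diag}(a,a^{-1})$, so the twist is trivial on $T^0$ and the pairing is honestly $\Lambda$-bilinear. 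One must still check that $d_{Iw}$ restricted to $\bH\otimes1$ actually lands in a copy of $\Lambda$ indexed by $T^0$. This should follow because the $T^1$-component of $\pi$ records the determinant/cyclotomic variable, which is killed by restricting both factors to $\otimes1$.

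\textbf{Step 3: comparison with Ohta.} Apply Corollary~\ref{dualityord}. For integer weights $k$, the specialisation of the pairing at $k$ equals $p^k(\mom^k x,\AL\,\mom^{k^\vee}y)$. Here $\mom^{\lambda^\vee}$ for $\lambda^\vee=[0,-k]$ is identified with Ohta's $\mom^k$ via $\TSym^k$ self-duality up to twist by $\det^k$, using the Weil pairing. Ohta's interpolation property \cite[Theorem~1.3.3]{Ohta2003} is stated with respect to $W_{Np}$ rather than $W_p$, and this discrepancy is handled by Remark~\ref{Ohtarem}. Perfectness of Ohta's pairing on ordinary parts, together with density of integer weights in $\Lambda$, then identifies the two pairings.

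\textbf{Main obstacle.} I expect the hardest part to be the bookkeeping of normalisations. This covers the identification of $\mom^{\lambda^\vee}$, with its chosen vector satisfying $(wf^{hw}_\lambda,f^{hw}_{\lambda^\vee})=1$, against Ohta's moment map, including signs from $w$ and the Tate twist. It also covers checking that our $\AL_\lambda$ agrees with Ohta's normalised $W$ up to the central element $(w\tau)^2$ and the determinant twist.
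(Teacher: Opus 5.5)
Your route is the paper's: specialise Theorem~\ref{dualitymain} to $\GL_2$ with $\tau=\mathrm{diag}(p,1)$, obtain $c_{[k,0]}=p^k$, deduce $\Lambda$-bilinearity from Remark~\ref{Iwasawaequiv} and Lemma~\ref{action} (using that $t\mapsto w_0(t^{-1})$ is the identity on $T^0$), and compare $\AL$ with $W_{Mp}$ via Remark~\ref{Ohtarem}. The gap is in how you obtain a $\Lambda$-valued pairing. The map $d_{Iw}$ lands in $H^2_c(Y,\underline{\Z_p[[T_0]]})=\Z_p[[T_0]]\otimes H^2_c(Y,\Z_p)$, where $T_0\simeq(\Z_p^\times)^2$. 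So your assertion in Step 1 that the restricted pairing takes values in $\Lambda\otimes H^2_c(Y,\Z_p)$ is unjustified. Likewise, the Step 2 argument that restricting both arguments to $\bH\otimes1$ ``kills'' the $T^1$-coordinate is only a heuristic. Proving it would require tracking how $\AL^*$ matches the $\det$-labels of the components of the $N_0$- and $\overline{N}_1$-towers under the unspecified identification $H^1_{Iw}(N_0)\simeq\bH\hat\otimes\Lambda$, and you do not do this.

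The paper avoids the issue entirely. Because $\lambda=(k,0)$ is trivial on $T^1$, the map $\mom^{\lambda}$ factors through the projection $\Z_p[[T_0]]\to\Z_p[[T^0(\Z_p)]]\simeq\Lambda$, so one simply composes $d_{Iw}$ with this projection. The result still interpolates the twisted cup products at $\lambda=(k,0)$. It is $\Lambda$-bilinear, since the projection is a ring map compatible with the $T^0$-actions. Its values lie in $H^2_c(Y,\underline{\Lambda})$, which is free of rank one because $Y$, of level $\Gamma_1(M)\cap\Gamma_0(p)$, is geometrically connected; there are no components to absorb.

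Several normalisations also need fixing.
First, $\eta(x)=\mathrm{diag}(x,1)$ already has $\eta+w_0\eta$ central, so no symmetrisation is needed. The symmetrised $\mathrm{diag}(x,x^{-1})$ would give $\overline{N}_1$ with entries in $p^2\Z_p$, hence $U=U_0(p^2)$ and $c_\lambda=p^{2k}$. The remark in \S\ref{lvl} only says that the intersection with $B$, and hence the anti-ordinary part, is unchanged, not $U$.
Second, Theorem~\ref{dualitymain} gives $\mom^{\lambda}\circ d_{Iw}=c_\lambda^{-1}\,(\mom^\lambda\cdot,\AL\,\mom^{\lambda^\vee}\cdot)$, not $p^{k}(\dots)$.
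Third, the paper records the sign you leave open: with the normalisation $(wf^{hw}_\lambda,f^{hw}_{\lambda^\vee})=1$, the restriction of $\mom^{\lambda^\vee}$ to $\bH\otimes1$ is $(-1)^k\mom^k$.
Finally, your density step goes beyond the statement, which only asks to recover Ohta's formula at integer weights. If you want it, perfectness is irrelevant: torsion-freeness of the target and Weierstrass preparation on each component of $\Lambda$ suffice. But it presupposes exactly the matching of constants that you have not carried out.
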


\begin{proof}
A weight $\lambda$ of $T$ is given by a pair $(k,l)$ of weights of $T^{0}, T^1$. Restricting to $\lambda=(k,0)$ we see that $c_\lambda\cdot \mom^{\lambda}$ factors through $$H_c^2(Y,\underline{\Z_p[[T_0]]})\to H_c^2(Y,\underline{\Z_p[[T^0(\Z_p)]]})\simeq H_c^2(Y,\underline {\Lambda}).$$

Moreover, $-w_0(k,0)=(k,-k)$ so choosing $$f^{hw}_{[k,0]}=x^{\otimes k}\in \TSym^k(\ZZ_px\oplus\ZZ_py)$$ and restricting $\mom^{\lambda}$ and $\mom^{\lambda^{\vee}}$ to the subspaces $\bH\simeq \bH\otimes1_{\Lambda}\subset H^{1}_{Iw,c}(N_0) $ which are $\ZZ_p[[T^0(\Z_p)]]$--stable by Lemma \ref{action} we see that these restrictions are given by $\mom^k$ (resp $(-1)^k\mom^k$ and as the involution from Remark \ref{Iwasawaequiv} is identity on $T^0$ we conclude that $d_{Iw}$ restricted to $\bH\otimes \bH_c$ is $\Lambda$--bilinear. 

The dominant weight is $\eta=(0,1)$ in the chosen coordinates and its inner product with both $(1,0)$ and $(0,1)$ is equal to $1$.
Therefore,  $c_{[k,0]}=p^{k}$ and the integrally normalised $\AL$ is $p^k$ times the rational Atkin--Lehner operator $W_{Mp}$ up to Hecke operators outside of $p$ which are invertible and commute with trace and moment maps so we obtain the claim. 
\end{proof}
Note that these points form a dense subset in the weight space so it would be interesting to recover formulas for specialisations at other arithmetical weights which are given in \cite[Theorem~1.3.3]{Ohta2003}.

\begin{rem}\label{Ohtarem}\hspace{0cm}
    \begin{enumerate}
        \item [(i)] In \cite{OhtES} Ohta considers the modular curve of level $\Gamma_1(Np)$ instead of $\Gamma_0(p)\cap \Gamma_1(N)$ as the base. However, due to Remark \ref{changelevel} one gets similar results for $\Gamma_1(Np)$.
        \item [(ii)]  As already mentioned, in \cite{OhtES} the duality is stated using operator $W_{Mp}$ whereas we consider $\AL$ but they differ by $\langle p\rangle_M W_M$ which is invertible and commutes with the trace maps defining $\bH_{(c)}$. Moreover, the moment map is equivariant with respect to  $\langle p\rangle_MW_M$ So recovering the duality for $W_{Mp}$ amounts to composing $d_{Iw}$ with $\Id\otimes \langle p\rangle_M W_M$.
        \item [(iii)] As a starting input in the construction of his pairing Ohta also considers a pairing between pro-\'etale sheaves on the curves $Y_1(Np^r)_{\overline{\Q}}$ which correspond to the quotients of the level $p$ subgroups by the upper-triangular unipotent part (see \textsection{1} of loc.~cit.).
    \end{enumerate}
\end{rem}

\subsection{The case of $p$-adic Hirzebruch--Zagier cycle}
We consider now a quadratic extension of totally real fields $L/F$ with $L$ unramified at $p$, denote $[F:\Q]=d$. Consider $\GG=\Res_{L/\Q}\GL_2$ (one can also work with $\GG^*=\GG\times_{\Res_{L/\Q}\mathbb{G}_m}\mathbb{G}_m$). Similarly, choosing $ \tau= \begin{bmatrix}    p & 0 \\    0 & 1\end{bmatrix}$
yields $U=U_0(p)$; take $U^p=U_1(\NN)$. In \cite{CaNiRo} the authors construct an Iwasawa cohomology class $\mathscr{Z}_{\infty}\in H^{2d}_{Iw}(EQ_G)$ related to Hirzebruch--Zagier cycle (here again the cohomology considered is the \'etale cohomology of the base-change to $\overline{\Q}$). The closed subgroup $Q_G$ is defined as the product of the upper-triangular unipotent $N$ and $\Res_{{\cO_L}/\ZZ}\mathbb{G}_m$ embedded into the diagonal maximal torus via $ \alpha \mapsto \begin{pmatrix}    \alpha & 0 \\    0 & \bar\alpha\end{pmatrix}$, where $z\mapsto \bar{z}$ is the non-trivial element of $\Gal(L/F)$.

Denote by $\Lambda$ the Iwasawa algebra from \cite[Section~4.4]{CaNiRo} . By Proposition \ref{dualitymain} and Remark \ref{Iwasawaequiv} for any $\mathscr{Z}'_{\infty}\in H^{2d}_{Iw,c}(EQ_G)$ we get a function valued in the $1$-dimensional $\Lambda$-module $H^{2d}_c(Y,\Lambda)$ interpolating the intersection pairing of $\mom^{\lambda}(\mathscr{Z}_{\infty})$ and $\AL(\mom^{\lambda^{\vee}}(\mathscr{Z}'_{\infty}))$ normalised by $c_{\lambda}$ for the weights $\lambda$ trivial on $Q_G$ (and $E$).

Furthermore, as in \cite[Section~2.2]{CaNiRo} under the identification of dominant weights trivial on the embedded $\Res_{{\cO_L}/\ZZ}\mathbb{G}_m$ with $(\underline{k},m) \in (\mathbb{Z}_{\ge 0})^{\Sigma_L} \times \mathbb{Z}$
such that $k_\sigma \equiv m \pmod{2}, k_{\sigma}=k_{\bar{\sigma}}$ where
$\underline{k} = (k_\sigma)_{\sigma \in \Sigma_L}$
and taking lattices in the corresponding representations of the form
$$\LL^{\underline{k},m}
:= \bigotimes_{\sigma \in \Sigma_L}
\TSym^{k_\sigma}(\cO e_{1,\sigma}\oplus \cO e_{2,\sigma})
\otimes \det{}_{\sigma} ^{\frac{m-k_\sigma}{2}}$$

one has $-w_0(\underline{k},m)=(\underline{k},-m)$ under the standard duality and $c_{(\underline{k},m)}=p^{\sum k_{\sigma}}$.  

Moreover, one can take $w := \begin{pmatrix}    0 & -1 \\    1 & 0\end{pmatrix}$ and taking the highest weight vector
$$f^{hw}_{(\underline{k},m)}=\bigotimes_{\Sigma_L} (e_{1,\sigma}^{\otimes k_{\sigma}}\otimes(e_{1,\sigma}\otimes e_{2,\sigma}-e_{2,\sigma}\otimes e_{1,\sigma})^{\otimes \frac{m-k_{\sigma}}{2}})$$ one obtains the following:

\begin{lemma}\label{selfdual}
    For the (self-dual) weights $$(\underline{k},0) \in (\mathbb{Z}_{\ge 0})^{\Sigma_L} \times \mathbb{Z},$$
such that $k_\sigma \equiv 0 \pmod{2},~k_{\sigma}=k_{\bar{\sigma}}$ one has $\LL^{\underline{k},0}\subset {\LL^{\underline{k},0}}^{\vee}$ and the vectors $f^{hw}_{(\underline{k},0)}$ satisfy $(f^{hw}_{(\underline{k},0)},wf^{hw}_{(\underline{k},0)})=1$.
\end{lemma}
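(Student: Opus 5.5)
The plan is to reduce everything to a computation on a single factor $\TSym^{k}(\cO e_1\oplus\cO e_2)$ of $\GL_2$ over $\cO$, since both the lattice and the vector are tensor products over $\sigma\in\Sigma_L$. The pairing on a tensor product is the product of the pairings on the factors, and $w$ acts diagonally. With $m=0$, $\LL^{\underline{k},0}$ is the tensor product over $\sigma$ of $\TSym^{k_\sigma}\otimes\det_\sigma^{-k_\sigma/2}$. The duality for a single factor is the standard $\GL_2$-invariant pairing
\[
\TSym^{k}(V)\otimes\det{}^{-k/2}\times\TSym^{k}(V)\otimes\det{}^{-k/2}\to \det{}^{-k}\otimes(\textstyle\bigwedge^2V)^{\otimes k}\cong\cO,
\]
induced by the alternating form $e_1\wedge e_2$ on $V=\cO e_1\oplus\cO e_2$. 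This is where the self-duality $-w_0(\underline{k},0)=(\underline{k},0)$ becomes concrete. Restricting $\mathrm{Sym}^k$ of this form to $\TSym^k$ gives an $\cO$-valued pairing. This proves the inclusion $\LL^{\underline{k},0}\subset(\LL^{\underline{k},0})^\vee$. Equality is not needed, since $\TSym$ is paired into $\TSym^\vee=\mathrm{Sym}$, which contains $\TSym$.

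For the second assertion I would compute the value on $x=e_1^{\otimes k}$ (with $k=k_\sigma$ even and $m=0$, the highest weight vector is $e_1^{\otimes k}$ twisted by $\det^{-k/2}$) and on $wx$. Since $we_1=e_2$, we get $wx=e_2^{\otimes k}$ times the determinant twist, and $\det w=1$ is harmless. The pairing of $e_1^{\otimes k}$ with $e_2^{\otimes k}$ induced by the symplectic form equals $\langle e_1,e_2\rangle^k=1$, up to the normalisation constant of the induced form on $\TSym^k$. I would fix that normalisation so that pure tensors $a^{\otimes k},b^{\otimes k}$ pair to $\langle a,b\rangle^k$. This is the natural form coming from $V^{\otimes k}$ restricted to symmetric tensors, and it is integral on $\TSym^k$ because $\TSym^k\subset V^{\otimes k}$ and the $k$-fold tensor form is integral there.

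The product over $\sigma$ then gives exactly $1$, with no sign, because each factor contributes $\langle e_1,e_2\rangle^{k_\sigma}=1$; the evenness of $k_\sigma$ would remove any sign in any case. The main point needing care is matching this pairing with the paper's identification of $V_\lambda^\vee$ and with the chosen $f^{hw}_{\lambda^\vee}$, so that the pairing $(\cdot,\cdot)$ in the statement really is this symplectic-induced form. I would argue that the dual is characterised up to a scalar by $G$-invariance, and that this scalar is pinned down by the computed value $1$ on $(f^{hw},wf^{hw})$. That is consistent with the normalisation $(wf^{hw}_\lambda,f^{hw}_{\lambda^\vee})=1$ used before Theorem~\ref{dualitymain}.
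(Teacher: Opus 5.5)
Your proposal is correct, and the paper gives no proof of Lemma~\ref{selfdual}; it simply invokes the ``standard duality''. Your factorwise argument is exactly the check the paper leaves implicit: take the restriction to $\TSym^{k_\sigma}$ of the $k_\sigma$-th tensor power of the alternating form, twisted by $\det_\sigma^{-k_\sigma}$. This form is integral, $G$-invariant and symmetric for even $k_\sigma$, so by irreducibility it is the unique invariant form normalised by its value $1$ on $(f^{hw},wf^{hw})$. The one convention to fix explicitly is that $\langle e_1,e_2\rangle=1$ is taken with respect to the generator $e_1\otimes e_2-e_2\otimes e_1$ of $\det$ that appears in $f^{hw}_{(\underline{k},0)}$; any other trivialisation changes the value by a power of $2$, which matters only when $p=2$.
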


In particular for such weights the map $\mom^{\lambda^{\vee}}$ is the composition of the natural embedding with $\mom^{\lambda}$.

While the results of \cite{CaNiRo} and \cite{LRZ} provide classes in the non-compactly supported Iwasawa cohomology, we can use suitable projectors to obtain compactly supported classes (however, one might need to work over a smaller affinoid neighbourhood of the weight space or to base change to $\Frac(\Lambda)$ possibly enlarging $\Lambda$ to contain suitable coefficients). An example relevant for our situation is the following: if $\mathbbm{f}$ is a Hida family corresponding to a $p$-stabilisation of ordinary cuspidal Hilbert eigenform of level $\NN$ coprime to $p$ the $\mathbbm{f}$-isotypical component $pr_{\mathbbm{f}}(\mathscr{Z}_{\infty})$ is a class in the parabolic cohomology $\Image(H^{2d}_{Iw,c}(Q_G)\to H^{2d}_{Iw}(Q_G))\otimes\Frac(\Lambda)$. Therefore, one obtains the following result combining Theorem \ref{dualitymain} and Lemma \ref{selfdual}:

\begin{prop}
    By applying $d_{Iw}$ to $\mathscr{Z}_{\infty}\otimes pr_{\mathbbm{f}} (\mathscr{Z}_{\infty})$ one obtains a function which at the weights $(\underline{k},0) \in (\mathbb{Z}_{\ge 0})^{\Sigma_L} \times \mathbb{Z},$
such that $k_\sigma \equiv 0 \pmod{2}, k_{\sigma}=k_{\bar{\sigma}}$ interpolates $p$-adically the intersection number of $\AL(\mathscr{Z}_{\infty})$ with the $\mathbbm{f}$-isotypical part of $\mathscr{Z}_{\infty}$ normalised by $p^{\sum k_{\sigma}}$.
\end{prop}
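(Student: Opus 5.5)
The plan is to specialise Theorem~\ref{dualitymain}, taking $\mathscr{Z}_\infty$ as the non-compactly supported input and $pr_{\mathbbm{f}}(\mathscr{Z}_\infty)$ as the compactly supported one. The only wrinkle is that $pr_{\mathbbm{f}}(\mathscr{Z}_\infty)$ lives in the parabolic image tensored with $\Frac(\Lambda)$, so the statement will be proved after base change to $\Frac(\Lambda)$ (or to a suitable localisation / affinoid neighbourhood where the projector is integral). First I lift $pr_{\mathbbm{f}}(\mathscr{Z}_\infty)$ to a class $\mathscr{Z}^c$ in $H^{2d}_{Iw,c}(EQ_G)\otimes\Frac(\Lambda)$. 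The choice of lift is irrelevant: by Hecke equivariance of the cup product and of $\AL$, the kernel of the map from compactly supported to ordinary cohomology pairs trivially with cuspidal isotypic classes after applying the projector. Then I form $d_{Iw}(\mathscr{Z}^c\otimes\mathscr{Z}_\infty)\in H^{2d}_c(Y,\underline{\Lambda})\otimes\Frac(\Lambda)$, which by Remark~\ref{Iwasawaequiv} is a $\Lambda$-bilinear expression (with the twisted module structure) in a rank one module.

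Next I fix a weight $\lambda=(\underline{k},0)$ with $k_\sigma$ even and $k_\sigma=k_{\bar\sigma}$. This weight is trivial on the embedded $\Res_{\cO_L/\Z}\mathbb{G}_m$ and on $E$, so by Remark~\ref{changelevel} Theorem~\ref{dualitymain} applies with $N_0$ replaced by $Q_G\cap U$. Here one has to check that $\lambda$ and $-w_0\lambda=\lambda$ are trivial on $Q_G/N_0$, which holds because the weight is self-dual and kills the twisted torus. The theorem then gives
\[
\mom^\lambda\bigl(d_{Iw}(\mathscr{Z}^c\otimes\mathscr{Z}_\infty)\bigr)=c_\lambda^{-1}\bigl(\mom^\lambda(\mathscr{Z}^c),\AL\,\mom^{\lambda^\vee}(\mathscr{Z}_\infty)\bigr),
\]
with $c_\lambda=p^{\sum k_\sigma}$, computed as in the text from $\langle\eta,\lambda-w_0\lambda\rangle$.

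I then invoke Lemma~\ref{selfdual}. The normalisation $(wf^{hw}_\lambda,f^{hw}_{\lambda^\vee})=1$ is satisfied by $f^{hw}_{\lambda^\vee}=f^{hw}_\lambda$ under the inclusion $\LL^{\underline{k},0}\subset(\LL^{\underline{k},0})^\vee$; symmetry of the pairing on even weights takes care of the order of arguments. So $\mom^{\lambda^\vee}$ is $\mom^\lambda$ followed by the natural embedding. Finally, $\mom^\lambda$ commutes with Hecke operators away from $p$ and hence with $pr_{\mathbbm f}$, so $\mom^\lambda(\mathscr{Z}^c)$ maps to the $\mathbbm f$-part of the specialisation of $\mathscr{Z}_\infty$. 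Combining these, the specialisation equals $c_\lambda^{-1}$ times the intersection number of $\AL(\mom^\lambda\mathscr{Z}_\infty)$ with that $\mathbbm f$-part, after using the adjunction $(x,\AL y)$ versus $(\AL x, y)$ up to the central factor coming from Lemma~\ref{centre}.

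The hardest step is making sense of specialisation at $\lambda$ for a $\Frac(\Lambda)$-valued class. I would first restrict to weights $\lambda$ outside the finitely many poles of the projector, that is, work over the affinoid / localisation where $pr_{\mathbbm f}$ is integral, and check that the compactly supported lift is compatible with $\mom^\lambda$ there. A secondary point is the adjointness of $\AL$ used to move it to the other factor. For this I would verify directly that $\AL^*$ is an isometry for Poincaré duality, since it is induced by an automorphism of $Y$, and that the normalisations $p^{\langle\eta,\lambda\rangle}$ combine into a sign or unit.
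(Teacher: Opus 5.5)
Your proposal is correct and is essentially the paper's argument. The paper specialises Theorem~\ref{dualitymain} in the $Q_G$-variant of Remark~\ref{Iwasawaequiv}, which is legitimate because for $m=0$ the weight satisfies $-w_0\lambda=\lambda$ and is trivial on the embedded torus, and then uses Lemma~\ref{selfdual} to identify $\mom^{\lambda^\vee}$ with $\mom^{\lambda}$ followed by $\LL^{\underline{k},0}\subset(\LL^{\underline{k},0})^\vee$, giving $c_\lambda=p^{\sum k_\sigma}$. Your handling of the $\Frac(\Lambda)$-coefficients and of the compactly supported lift supplies details the paper leaves implicit; the lift should be taken $\mathbbm{f}$-isotypic, since $\AL\,\mathscr{Z}_\infty$ is not itself parabolic.

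The adjunction step is superfluous and best dropped. The theorem already places $\AL$ on the non-compactly supported factor $\mathscr{Z}_\infty$, exactly as in the statement, and the order of the two factors does not matter because both lie in the even degree $2d$. Moving $\AL$ to the other factor would hold only up to the central translation by $(w\tau)^{2}$ and the action of $\AL$ on $H^{4d}_c(Y,\cO)$, which may permute connected components.
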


\bibliographystyle{amsalpha} 
\bibliography{references}

\end{document}